\newtheorem{thm}{Theorem}[section]
\newtheorem{ap}{Assumption}[section]
\newtheorem{df}[thm]{Definition}
\newtheorem{prop}[thm]{Proposition}
\newtheorem{lm}[thm]{Lemma}
\newtheorem{cor}[thm]{Corollary}
\theoremstyle{remark}
\newtheorem{rk}[thm]{Remark}
\newtheorem{ex}[thm]{Example}
\numberwithin{equation}{section}
\newcommand{\ee}{\mathbb E}
\newcommand{\pp}{\mathbb P}
\newcommand{\nn}{\mathbb N}
\newcommand{\rr}{\mathbb R}
\newcommand{\yy}{\mathbb Y}
\newcommand{\1}{\mathbf 1}
\newcommand{\DD}{\mathcal D}
\newcommand{\LL}{\mathcal L}
\newcommand{\OO}{\mathcal O}
\newcommand{\PP}{\mathcal P}
\newcommand{\RR}{\mathcal R}
\newcommand{\KK}{\mathcal K}
\newcommand{\WW}{\mathcal W}
\newcommand{\FFF}{\mathscr F}
\newcommand{\PPP}{\mathscr P}
\newcommand{\<}{\langle}
\renewcommand{\>}{\rangle}
\allowdisplaybreaks \allowdisplaybreaks[4]
\newcommand{\dd}{\mathrm{d}}
\newcommand{\abs}[1]{\left\lvert #1 \right\rvert}
\newcommand{\norm}[1]{\left\lVert #1 \right\rVert}
\begin{document}

\title[Non-asymptotic Error Analysis of Explicit MEMs for Non-contractive SODEs]
{Non-asymptotic Error Analysis of Explicit Modified Euler Methods for Superlinear and Non-contractive SODEs}

\author{Zhihui LIU}
\address{Department of Mathematics \& National Center for Applied Mathematics Shenzhen (NCAMS) \& Shenzhen International Center for Mathematics, Southern University of Science and Technology, Shenzhen 518055, China}
\email{liuzh3@sustech.edu.cn}

\author{Xiaojie WANG}
\address{School of Mathematics and Statistics, HNP-LAMA, Central South University, Changsha 410083, P.R. China}
\email{x.j.wang7@csu.edu.cn; x.j.wang7@gmail.com}

\author{Xiaoming WU }
\address{Department of Mathematics, Southern University of Science and Technology, Shenzhen 518055, P.R. China}
\email{12331004@mail.sustech.edu.cn}

 \author{Xiaoyan ZHANG }
\address{School of Mathematics and Statistics, HNP-LAMA, Central South University, Changsha 410083, P.R. China}
\email{232103003@csu.edu.cn}
\thanks{The first and third authors are supported by the National Natural Science Foundation of China, No. 12101296, Guangdong Basic and Applied Basic Research Foundation, No. 2024A1515012348, and Shenzhen Basic Research Special Project (Natural Science Foundation) Basic Research (General Project), Nos. JCYJ20220530112814033 and JCYJ20240813094919026. The second and last authors are supported by the Natural Science Foundation of China, Nos. 12471394, 12071488, 12371417, and Hunan Basic Science Research Center for Mathematical Analysis, No. 2024JC2002.}

\subjclass[2010]{60H35; 37M25; 65C30}

\date{}


\keywords{explicit modified Euler method, numerical Lyapunov structure, uniform-in-time weak error estimate, Wasserstein distance}

\maketitle

\begin{abstract}
A family of explicit modified Euler methods (MEMs) is constructed for long-time approximations of super-linear SODEs driven by multiplicative noise. The proposed schemes can preserve the same Lyapunov structure as the continuous problems.
Under a non-contractive condition, we establish a non-asymptotic error bound between the law of the numerical approximation and the target distribution in Wasserstein-1 ($\WW_1$) distance through a time-independent weak convergence rate for the proposed schemes. 
As a by-product of this weak error estimate, we obtain an $\OO(\tau|\ln \tau|)$ convergence rate between the exact and numerical invariant measures.
 \end{abstract}

\section{Introduction}
\noindent
%

In this paper, we consider the asymptotic behavior and non-asymptotic error analysis for a family of explicit numerical approximations to the  following stochastic ordinary differential equation (SODE):
\begin{align}
\label{SDE} 
    \dd X_t
    =
    b(X_t) 
  \, \dd t
    +
    \sigma(X_t) 
    \, \dd W(t), 
    \quad t >0. 
\end{align}
Here $b: \rr^d \rightarrow \rr^d$ and  $\sigma: \rr^d \rightarrow \rr^{d \times m}$ are measurable functions satisfying certain monotone conditions,
and $W$ is an $\rr^m$-valued Wiener process on a complete filtered probability space  $(\Omega,
\FFF,\{\FFF_t\}_{t \ge 0},\pp)$. 

Under certain conditions, it is known that Eq. \eqref{SDE} admits a unique invariant measure $\mu$.  
To asymptotically sample from $\mu$, one practical way relies on the numerical approximation via the classical Euler--Maruyama (EM) scheme
\begin{align} \label{em}
    Y_{n+1}
    =
    Y_n
    +
    b(Y_n)\tau
    +
    \sigma(Y_n) \delta W_n, 
    \quad Y_0=x_0,
\end{align}
where $\delta W_{n} :=W({t_{n+1}})-W({t_n})$ with $t_n := n \tau$, $n \in \nn$, for a fixed step-size $\tau\in(0,1)$.
Then a non-asymptotic analysis is needed to characterize the error between the target distribution $\mu$ and the law of the EM scheme \eqref{em}.

Non-asymptotic analysis focuses on quantifying the explicit dependency of the above error with respect to the step-size, rather than characterizing asymptotic behavior as $n \to \infty$ or $\tau \to 0$. 
Recently, a series of literature studied non-asymptotic error analysis in an infinite-time regime for EM schemes applied to SODE with coefficients of linear growth across various metrics, including $\WW _p$ distances ($p \geq 1$), total variation (TV) distance, and Kullback--Leibler (KL) divergence: 
For example, \cite{PP23} derived $\OO(\tau_n|\log \tau_n|)$ and $\OO(\tau_n^{1-\varepsilon})$ convergence rates for the EM with varying time-steps $\tau_n$, under $\WW_1$ and TV distances, respectively. For SODEs with multiplicative noise and H\"older drift, \cite{LWX23} obtained error bounds in TV  and $\WW_1$ distance for the EM with varying time-steps. More recently, the authors of \cite{LWW24} established an $\OO(\tau^2)$ KL divergence bound in finite time for the EM applied to SODEs with multiplicative noise under globally Lipschitz conditions. 

For SODEs \eqref{SDE} with super-linear coefficients, it was shown in \cite{HJK11} that the EM scheme \eqref{em} would blow up in $2p$-th moment for all $p \ge 1$. The square function is thus not an appropriate Lyapunov function of the EM method, even though it is a natural Lyapunov function of Eq. \eqref{SDE}.
As the first aim of this paper, we construct a family of explicit modified Euler methods (MEMs, see \eqref{MEM}) that can preserve the same Lyapunov structure, in $2p$-th moment for all $p \ge 1$, of Eq. \eqref{SDE} with super-linear coefficients (see Theorem \ref{thm-Y}).
Such a nice numerical Lyapunov structure plays a vital role in establishing the non-asymptotic error bounds in the infinite-time regime.

In the additive noise case with non-contractive drift coefficients of polynomial growth, \cite{NNZ25} established uniform-in-time convergence rates of order $\OO(\tau)$ and order $\OO(\tau^{1/2})$ in $\WW_1$ and $\WW_2$ distances, respectively,  for a kind of tamed scheme applied to the overdamped Langevin SODE.
Meanwhile, \cite{PWW25} obtained $\OO(\tau|\ln \tau|)$  error estimates under TV distance for projected schemes, 
and \cite{LS25} provided a non-asymptotic convergence analysis in KL divergence for tamed methods, both in the setting of the overdamped Langevin SODE.  

By contrast, the multiplicative noise case with superlinearly growing coefficients remains largely unexplored.
When the superlinear coefficients of multiplicative noise driven SODEs are contractive, i.e., there exist $p^*>1$ and $L >0$ such that  
\begin{align} \label{eq:contractive-condition}
 \langle x - y, b ( x ) -  b ( y ) \rangle +  \tfrac{2 p^*-1}{2} \| \sigma (x) -  \sigma (y) \|    \leq - L | x - y |^2, 
 \quad \forall ~ x, y \in \rr^d,
\end{align}
uniform-in-time strong convergence rates and approximations of invariant measures were examined in \cite{WW24} and \cite{brehier2023approximation, fang2020adaptive, liu2023backward, PWW24}, respectively, for various time-stepping schemes including the backward Euler, tamed, and projected methods. Such a contractive condition offers a contractive property that is key in obtaining uniform-in-time error estimates for the numerical approximations.
Nevertheless, this condition is too restrictive, and in this paper, we attempt to work under a contractivity at infinity condition (see Assumption \ref{A3}), which is much weaker than \eqref{eq:contractive-condition}. More accurately, under  the contractivity at infinity condition, we establish a non-asymptotic error bound between the law of the numerical approximation produced by the constructed MEM \eqref{MEM} and the target distribution in $\WW_1$ distance:
\begin{align}
\label{eq:intro-W1}
    \WW_1
    (\LL(Y_n^{x_0}),\pi)
    & \leq 
    C_* e^{-\lambda \tau n} +
    C^* \tau|\ln \tau|, \quad \forall~ n \in \nn_+,
\end{align} 
see Theorem \ref{thm-W1} for the main result of this paper.
%
%
This is achieved by deriving a time-independent weak error estimate of order $\OO(\tau|\ln \tau|)$ under the $\WW_1$ distance for the MEMs (see Theorems \ref{thm-weak}). 
As a by-product of the weak error estimate, we obtain an $\OO(\tau|\ln \tau|)$ convergence rate between the exact and numerical invariant measures (Corollary \ref{cor-inv}), answering a question left in \cite{LW24}. 

It is worth mentioning that our analysis and results are highly non-trivial compared to existing related works mentioned above. Firstly, a new family of numerical methods, including tamed and projected schemes, is introduced to approximate the superlinear SODEs in an infinite-time horizon (see \eqref{ex1} and \eqref{ex2}). Secondly, the possible superlinear growth of the diffusion coefficient $\sigma$ and the non-contractive setting bring essential difficulties in the uniform-in-time analysis. Last but not least, we turn the analysis of the $\WW_1$ error bound into weak error estimates with only Lipschitz continuous test functions $\phi$, different from the standard weak error analysis in \cite{brehier2023approximation, fang2020adaptive, liu2023backward, PWW24}, where $\phi$ is assumed to be smooth enough. 
To overcome difficulties caused by the low regularity of the test functions, we rely on the Bismut--Elworthy--Li formula to explore more delicate regularity estimates of the associated Kolmogorov Equation (see Subsection \ref{subsec:regularity-Kolmogorov-Equation}).

The outline of the paper is as follows. 
Section \ref{sec2}  sets up a framework and presents the main result.
In Section \ref{sec3}, we derive uniform moment estimates for the exact and numerical solutions, along with regularity of the Kolmogorov equation associated with Eq. \eqref{SDE}.
Section \ref{sec4} proves the main result by establishing a time-independent weak error analysis for the proposed schemes. 

\section{Settings and main results}
\label{sec2}

This section will give the assumptions, the proposed schemes, and the main result about the non-asymptotic bounds in $\WW_1$ distance between the laws of the numerical and target distribution.

Let us begin with some preliminaries. 
Throughout this paper, we denote by $\abs{\cdot}$ and $\<\cdot, \cdot\>$ the Euclidean norm and the inner product in $\rr$, $\rr^d$, or $\rr^m$, respectively, if there is no confusion. By $\norm{\cdot}$ we denote the Hilbert--Schmidt norm in $\rr^{d \times m}$. 
In addition, we use $C_b(\rr^d)$ and $\mathrm{\rm Lip}(\rr^d)$ to denote the Banach spaces of all uniformly continuous, bounded mappings and Lipschitz continuous mappings $\phi:\rr^d \rightarrow \rr$ endowed with the seminorms $|\phi|_0=\sup_{x\in\rr^d}|\phi(x)|$ and $|\phi|_{\mathrm{\rm Lip}}=\sup_{x\ne y}|\phi(x)-\phi(y)| |x-y|^{-1}$, respectively.
 Moreover, for $k \in \nn_+$, denote by $C_b^k(\rr^d)$ the subspace of $C_b(\rr^d)$ consisting of all functions with bounded partial derivatives $D^i\phi(x), 1\le i\le k$, endowed with the norm $|\phi|_k:=|\phi|_0+\sum_{i=1}^k \sup_{x\in\rr^d} |D^i\phi(x)|$.

Denote by $\WW_p(\pi_1,\pi_2)$ the $L^p$-Wasserstein distance between two distributions $\pi_1$ and $\pi_2$ on $\rr^d$  \cite[ Section 5.1]{san15}:
\begin{align*}
\WW_p(\pi_1,\pi_2):=\min_{\pi \in \PP(\pi_1,\pi_2)} \Big\{\int_{\rr^d\times \rr^d}|x-y|^p \pi(\dd x, \dd y) \Big\}^{1/p},  
\end{align*}
where $\PP(\pi_1,\pi_2)$ stands for the set of probability measures on $\rr^d\times \rr^d$ with respective marginal laws $\pi_1$ and $\pi_2$.
For convenience, we further denote
$|f|_{L_\omega^p}:=(\int_\Omega f(\omega) \pp(\dd \omega))^{1/p}$
for $f \in L^p(\Omega; \rr^d)$ and $a \vee b:=\max\{a, b\}$ for two positive numbers $a$ and $b$.

We first introduce the following dissipativity condition to establish uniform moment bounds for the SODEs.

\begin{ap}
\label{A1}
There exist positive constants $L_1,L_2, L_3$, $p^{\star} \ge 1$, and $\gamma >1$ such that for any $x,y\in\rr^d$,
     \begin{align}
       &  \< 
         x-y, b(x)-b(y)
         \> 
         +
         \frac{2p^{\star}-1}{2}
         \|\sigma(x)
         -
         \sigma(y)\|^2
         \leq 
         L_1|x-y|^2,\label{mon}\\
   & \< x, b(x)\>
    +
    \frac{p^{\star}(2p^{\star}-1)}{2}
    \|\sigma(x)\|^2
    \leq
   L_2 -L_3|x|^{\gamma+1}.\label{coe}
\end{align}
\end{ap}

Moreover, we require that the coefficients $b$ and $\sigma$ have continuous partial derivatives up to the third order.

\begin{ap}
\label{A2}
   Let $b,\sigma$ have all continuous derivatives up to third order,
    and there exist a positive constant $C$ such that for any $k=1,2,3$ and $x, v_k \in \rr^d$, 
    \begin{align*}
            |
            D^k b(x)
            (v_1,\cdots, v_k)
            | 
            & \leq 
            C
           \big [
            \1_{\gamma \leq k}
            +
            \1_{\gamma >k}
            (1+|x|^{\gamma-k})
           \big ]|v_1| 
            \cdots |v_k|, \\
            |
            D^k \sigma_j(x)
            (v_1, \cdots, v_k)
            |^2 
            &  \leq 
            C
            [
            \1_{\gamma \leq 2k-1}
            +
            \1_{\gamma >2k-1}
            (1+|x|^{\gamma-(2k-1)})
            ]|v_1|^2\cdots|v_k|^2.
    \end{align*}

\end{ap}

Assumption \ref{A2} serves as a kind of polynomial growth condition, and in proofs we will need some implications of this assumption: for any $ x, y,v_1,v_2,v_3 \in \rr^d$,
\begin{align*}
 | D^2 b(x)(v_1, v_2) -D^2 b(y)(v_1, v_2)| 
    \leq 
    C(1+|x|+|y|)^{\max \{0, \gamma-3\}} |x-y| |v_1|  |v_2|, 
\end{align*}
which in turn gives
\begin{align}
\label{Db-v}
       |  D b(x) v_1  -   D b(y) v_1| 
        & \leq 
        C(1+|x|+|y|)
        ^{\max \{0, \gamma-2\}}|x-y| |v_1|, \\
\label{b-grow}
        |b(x)-b(y)| 
        & \leq C(1+|x|+|y|)^{\gamma-1}
        |x-y|.
\end{align}

Following the same idea, Assumption \ref{A2} also ensures, for $j \in\{1, \cdots, m\}$,
\begin{align*}
   & |D^2 \sigma_j(x)(v_1, v_2)
        -
        D^2 \sigma_j(y)(v_1, v_2)
        |^2
        \leq  
        C (1+|x|+|y|)^{\max \{0, \gamma-5\}}  |x-y|^2|v_1|^2|v_2|^2.
\end{align*}
This in turns yields 
\begin{align}
    |D \sigma_j(x) v_1-D \sigma_j(y) v_1|^2 
        & \leq 
        C 
        (1+|x|+|y|)
        ^{\max \{0, \gamma-3\}}|x-y|^2  |v_1|^2, \label{Ds-v}\\
 |\sigma_j(x)-\sigma_j(y)
        |^2 
        & \leq C(1+|x|+|y|)^{\gamma-1}
        |x-y|^2. \label{s-grow}
\end{align}

We introduce a contractivity at infinity condition, instead of the global contractive condition \eqref{eq:contractive-condition}, which is used to establish the exponential contraction for Eq. \eqref{SDE}.

\begin{ap}
\label{A3}
 $\lambda_0^{-2} I \geq \sigma\sigma^T \geq \lambda_0^2 I$ for some constant $\lambda_0 \in(0,1)$, and
there exist positive constants $K_1, K_2, r_0$ such that $\sigma_0:=\sqrt{\sigma\sigma^T-\lambda_0^2 I}$ satisfies
\begin{align*}
        & 
\| \sigma_0(x)-\sigma_0(y)\|
        ^2
        -
        \frac{\< \sigma(x)-\sigma(y),x-y\>^2}
        {|x-y|^2}
        +
        \< b(x)-b(y), x-y\> \nonumber\\
        & \leq  
     \{(K_1+K_2) 1_{\{|x-y| \leq r_0\}}-K_2\}|x-y|^2, 
        \quad \forall~ x, y \in \rr^d.
\end{align*}
%
%
%
%
%
\end{ap}

%
%
%
%
%
%

\begin{rk} \label{rk-erg} 
Under Assumption \ref{A3}, \cite{wang20} derived the $\WW_1$-exponential ergodicity of Eq. \eqref{SDE}: for any initial distribution $\mu$ on $\rr^d$ of Eq. \eqref{SDE}, there exist positive constants $C_*$ and $\lambda$ such that the corresponding Markov semigroup $(P_t)_{t \geq 0}$ and its invariant measure $\pi$ satisfy  
    \begin{align}
    \label{ex-decay}
    \WW_1
    (\mu P_t, \pi) 
    \leq 
    C_* e^{-\lambda t}
    \WW_1(\mu, \pi), 
    \quad \forall ~t \geq 0.
\end{align}  
\end{rk}

To numerically solve the  superlinear SODE \eqref{SDE}, we propose the following MEMs:
\begin{align}
\label{MEM} 
Y_{n+1}
=
\PPP(Y_n)
+
b_\tau(\PPP(Y_n)) \tau
+
\sum_{j=1}^m
\sigma_{j,\tau}(\PPP(Y_n))  
\delta W_{n},
\end{align}
where $\PPP:\rr^d \rightarrow \rr^d$ is a kind of modification function, and $b_\tau(x)$ and $\sigma_{j,\tau}(x)$, $j \in \{1,2,\cdots,m\}$, are $\rr^d$-valued measurable functions.
In addition, we need some conditions to set up a general framework for the MEM \eqref{MEM}.

\begin{ap}
    \label{A4}
There exist constants $ \alpha_1 \ge 1$ and $\alpha_2 \ge 2$ such that for any $x \in \rr^d$ and $j \in \{1,2,\cdots,m\}$,
    \begin{align}
            | b_\tau(\PPP(x)) | & \leq C|b(x)|,\\
            | \sigma_{j,\tau}(\PPP(x))  | &\leq C|\sigma_j(x)|,\\
            |b_\tau(\PPP(x))-b(\PPP(x))|+
            |\sigma_{j,\tau}(\PPP(x))-\sigma_j(\PPP(x))|
            &\leq C \tau (1+|x|^{\alpha_1}), \\
    |\PPP(x)| &\leq  |x|,   \label{MEM-rel} \\
    |\PPP(x)-x| & \leq  C \tau^2 |x|^{\alpha_2}.
\end{align}
\end{ap}

\begin{ap}
\label{A5}
There exist positive constants $K_3,K_4,K_5,K_6, p^*\ge 1$ and $\alpha_3<2$ such that for all $x\in \rr^d$,
\begin{align}
2\<\PPP(x),b_\tau(\PPP(x))\>+\binom{2p^{\star}}{2}\sum_{j=1}^m
|\sigma_{j,\tau}(\PPP(x)) |^2+\tau|b_\tau(\PPP(x))|^2
& \le K_3-K_4|\PPP(x)|^2, \label{mon*}\\ 
\sqrt{\tau} \sum_{j=1}^m|
\sigma_{j,\tau}(\PPP(x)) |^2 
& \le K_5 +K_6|\PPP(x)|^{\alpha_3}. \label{s-tau}
\end{align}
\end{ap}

We mention that an analogous framework was established in \cite{pang2024antithetic} for a modified Milstein method without Levy area that was used to approximate SODEs over a finite time interval.
Also, it should be noted that the above framework includes the following tamed and projected schemes as special cases.

\begin{ex}
\begin{enumerate}
\item
\textbf{Tamed Euler method (TEM):}  
    \begin{align} \label{ex1}
\begin{split}
        & \PPP(x)
        :=
        x, \quad 
        b_\tau( \PPP(x))
        :=
        \frac{b(x)}{(1+\tau|x|^{4(\gamma-1)})^{1/4}}, \\
        & \sigma_{j,\tau}( \PPP(x))
        :=
        \frac{\sigma_j(x)}{(1+\tau|x|^{4(\gamma-1)})^{1/4}},
        \quad x \in \rr^d.
        \end{split}
    \end{align}
It is not hard to verify that the above TEM  satisfies Assumptions \ref{A4} and \ref{A5} (with $\alpha_1=5\gamma-4, \alpha_2=2$ and $\alpha_3=3-\gamma$); the proof can be seen in \cite{LW24}.

 \item
\textbf{Projected Euler method (PEM):}
\begin{align}\label{ex2}
\begin{split}
         \PPP(x)
         &:=
         \left\{\begin{array}{ll}
         \min \{1, \tau^{-\frac{1}{2\gamma}}|x|^{-1}\} x, & x \neq \textbf{0}, \\
         0, &  x=\textbf{0},
         \end{array} 
         \right.\\
         b_\tau(\PPP(x))
         &:=
         b(\PPP(x)), 
         \quad
         \sigma_{j,\tau}(\PPP(x))
         :=
         \sigma_j(\PPP(x)), \quad x \in \rr^d.
         \end{split}
\end{align}
Evidently, Assumptions \ref{A4} and  \ref{A5} can be fulfilled with $\alpha_1=1, \alpha_2=4\gamma+1$, and  $\alpha_3=1$. We mention that such a projected scheme was first proposed in \cite{BIK16} to strongly approximate superlinear SODEs in finite time and later used by \cite{PWW24} to approximate invariant measures of superlinear SODEs.
\end{enumerate}
\end{ex}

Denote by $\{Y_n^{x_0}\}_{n \geq 0}$ and $\{\LL(Y_n^{x_0})\}_{n \geq 0}$ the solution and its law, respectively, of the MEM \eqref{MEM} with the initial value $Y_0^{x_0}=x_0$.
Our main result of this paper is formulated as follows, whose proof is postponed to Section \ref{sec4}.

\begin{thm}
\label{thm-W1}
Let Assumptions \ref{A1}-\ref{A5} hold and let $\pi$ be the unique invariant measure of the Markov semigroup $(P_t)_{t \geq 0}$ of Eq. \eqref{SDE}. 
There exist positive constants $C_*$, $\lambda$, and $C^*$ such that for any $\tau \in (0, 1)$ and $n \in \nn$, 
\begin{align}
\label{W1}
    \WW_1
    (\LL(Y_n^{x_0}),\pi)
    & \leq 
    C_* e^{-\lambda \tau n} +
    C^* \tau|\ln \tau|.
\end{align}
\end{thm}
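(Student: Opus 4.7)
The plan is to combine the exponential ergodicity recalled in Remark~\ref{rk-erg} with a time-uniform weak error estimate of order $\OO(\tau|\ln\tau|)$ via the triangle inequality
\begin{align*}
\WW_1(\LL(Y_n^{x_0}),\pi) \leq \WW_1(\LL(Y_n^{x_0}),\LL(X_{t_n}^{x_0})) + \WW_1(\LL(X_{t_n}^{x_0}),\pi).
\end{align*}
The second term is bounded by $C_* e^{-\lambda t_n}\WW_1(\delta_{x_0},\pi)$, which is finite because $\pi$ has finite first moment as a consequence of the dissipativity Assumption~\ref{A1}; this delivers the $C_* e^{-\lambda\tau n}$ term in \eqref{W1}. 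For the first term, the Kantorovich--Rubinstein duality
\begin{align*}
\WW_1(\LL(Y_n^{x_0}),\LL(X_{t_n}^{x_0})) = \sup_{|\phi|_{\mathrm{Lip}}\le 1}\bigl|\ee[\phi(X_{t_n}^{x_0})]-\ee[\phi(Y_n^{x_0})]\bigr|
\end{align*}
reduces the task to a time-independent weak error bound of order $\OO(\tau|\ln\tau|)$ for merely Lipschitz test functions, which is the content of Theorem~\ref{thm-weak}.

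For the weak error itself, I would introduce the Kolmogorov backward semigroup $u(t,x):=\ee[\phi(X_t^x)]$ and telescope
\begin{align*}
\ee[\phi(X_{t_n}^{x_0})]-\ee[\phi(Y_n^{x_0})] = \sum_{k=0}^{n-1}\bigl(\ee[u(t_n-t_k,Y_k)]-\ee[u(t_n-t_{k+1},Y_{k+1})]\bigr),
\end{align*}
and analyze each term by an It\^o--Taylor expansion of $Y_{k+1}$ around $\PPP(Y_k)$ combined with the Kolmogorov equation $\partial_t u = \LL u$. Thanks to Assumption~\ref{A4}, the modification discrepancy $\PPP(Y_k)-Y_k$ is of size $\tau^2|Y_k|^{\alpha_2}$ and the defects $b_\tau-b$, $\sigma_{j,\tau}-\sigma_j$ are of size $\tau(1+|Y_k|^{\alpha_1})$; fed through the uniform moment bounds of Theorem~\ref{thm-Y}, the one-step error reduces to a $\tau^2$-sized quantity weighted by $D^i u(t_n-t_{k+1},\cdot)$ for $i=1,2,3$ evaluated at $\PPP(Y_k)$, together with polynomial weights in $|Y_k|$ absorbed by the moment bounds.

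The main obstacle, and the heart of the argument, is proving time-singular yet exponentially decaying derivative estimates for $u$ on the full half-line, of the schematic form
\begin{align*}
|D^i u(t,x)| \le C(1+|x|^{q_i})\bigl(1\wedge t\bigr)^{-(i-1)/2} e^{-\lambda t}|\phi|_{\mathrm{Lip}},\qquad i=1,2,3,
\end{align*}
relying on the Bismut--Elworthy--Li formula (made available by the uniform ellipticity $\sigma\sigma^{\top}\ge\lambda_0^2 I$ of Assumption~\ref{A3}) combined with the contractivity at infinity in Assumption~\ref{A3} to propagate the factor $e^{-\lambda t}$; this is precisely the regularity analysis advertised in Subsection~\ref{subsec:regularity-Kolmogorov-Equation}. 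Once such bounds are secured, the one-step errors sum to
\begin{align*}
\sum_{k=0}^{n-1}\tau^{2}\bigl(1\wedge(t_n-t_{k+1})\bigr)^{-1}e^{-\lambda(t_n-t_{k+1})} \le C\tau|\ln\tau|,
\end{align*}
after splitting into the short-time part $t_n-t_{k+1}\le 1$, which yields the logarithmic factor, and the long-time part, whose exponential decay contributes only $\OO(\tau)$. This gives the time-uniform $C^*\tau|\ln\tau|$ bound on the weak error and closes the proof of \eqref{W1}.
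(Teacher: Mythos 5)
Your proposal is correct and follows essentially the same route as the paper: the triangle inequality combined with the $\WW_1$-exponential ergodicity of Remark \ref{rk-erg}, Kantorovich duality to reduce to a uniform-in-time weak error bound for Lipschitz test functions, telescoping through the Kolmogorov equation, Bismut--Elworthy--Li-based derivative estimates with singularities $(1\wedge t)^{-(i-1)/2}$ tempered by $e^{-\lambda t}$, and the logarithmic factor arising from summing the $t^{-1}$ singularity of $D^3u$ (and the $D^2u$ increments) over the final unit time window. This matches the paper's decomposition into $J_1$, $J_{21}$, $J_{22}$ and the regularity results of Lemma \ref{lm-Du}.
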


As a by-product of the uniform weak error estimates in Theorem \ref{thm-weak}, we have the following time-independent  $\OO(\tau|\ln \tau|)$ convergence rate between the numerical invariant measure of the MEM \eqref{MEM} and the exact invariant measure of Eq. \eqref{SDE}, answering a question left in \cite{LW24}. 

\begin{cor}\label{cor-inv}
Let Assumptions \ref{A1}-\ref{A5} hold and $\pi_\tau$ be the invariant measure of the MEM \eqref{MEM}.
For any $\tau \in (0, \tau_{\max})$ with $\tau_{\max}:=\min\{1/K_4,1\}$, there exists a constant $C$ such that 
\begin{align*} 
|\pi(\phi)-\pi_\tau(\phi)| \le C \tau\ln|\tau|, \quad \forall ~ \phi \in {\rm Lip}(\rr^d).
\end{align*}
\qed
\end{cor}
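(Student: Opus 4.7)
The plan is to exploit the uniform weak error estimate in Theorem \ref{thm-weak} together with the $\WW_1$-exponential ergodicity of the continuous semigroup stated in Remark \ref{rk-erg}, with both processes started from a stationary initial distribution $Y_0 \sim \pi_\tau$. Under this choice, invariance of $\pi_\tau$ under the MEM Markov kernel yields $\LL(Y_n^{Y_0}) = \pi_\tau$ for every $n$, so that $\pi_\tau(\phi) = \ee[\phi(Y_n^{Y_0})]$ for any $\phi \in \mathrm{Lip}(\rr^d)$. This converts the question about invariant measures into a uniform-in-$n$ weak error.

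Concretely, for $\phi \in \mathrm{Lip}(\rr^d)$ I would write the triangle inequality
\begin{align*}
|\pi(\phi) - \pi_\tau(\phi)|
\le
|\pi(\phi) - \ee[\phi(X_{t_n}^{Y_0})]|
+
|\ee[\phi(X_{t_n}^{Y_0})] - \ee[\phi(Y_n^{Y_0})]|.
\end{align*}
The first term is bounded, after conditioning on $Y_0$ and using the duality representation of $\WW_1$ with Lipschitz test functions, by
$C_* |\phi|_{\mathrm{Lip}} e^{-\lambda t_n} \WW_1(\pi_\tau, \pi)$ thanks to Remark \ref{rk-erg}. The finiteness of $\WW_1(\pi_\tau, \pi)$ follows from the uniform-in-$\tau$ moment bounds for the MEM (Theorem \ref{thm-Y}) applied to the stationary law $\pi_\tau$, combined with the standard moment bound for $\pi$ implied by Assumption \ref{A1}. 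The second term is exactly controlled by Theorem \ref{thm-weak}, which after averaging its pointwise bound against $\pi_\tau$ gives $C \tau |\ln \tau|$, the averaging being legitimate because the constant in Theorem \ref{thm-weak} depends polynomially on the initial value and $\pi_\tau$ has polynomial moments of all orders. Sending $n \to \infty$ the exponentially decaying term vanishes and the desired bound remains.

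The main obstacle is ensuring existence and uniqueness of the numerical invariant measure $\pi_\tau$ and its moment regularity. Existence follows from a Krylov--Bogolyubov argument using the Lyapunov structure in Theorem \ref{thm-Y} (which gives tightness of the time averages under the threshold $\tau \in (0, \tau_{\max})$ with $\tau_{\max} = \min\{1/K_4,1\}$), and uniqueness follows from the $\WW_1$-geometric contraction of the numerical semigroup that is implicit in Theorem \ref{thm-W1} applied to two different initial points. Once these properties are in hand, the argument reduces to plugging in the two ingredients above, so no further nontrivial estimate is required.
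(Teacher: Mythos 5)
Your proposal is correct and is essentially the argument the paper intends (the corollary is stated as an immediate by-product of Theorem \ref{thm-weak} together with the ergodicity \eqref{ex-decay}, with no separate proof given): taking $Y_0\sim\pi_\tau$ so that $\LL(Y_n^{Y_0})=\pi_\tau$ for all $n$, the exponential decay of $\WW_1(\LL(X_{t_n}^{Y_0}),\pi)$ from Remark \ref{rk-erg} and the uniform-in-$n$ weak bound of Theorem \ref{thm-weak} (applied with the random initial value $Y_0$, whose moments are controlled via the Lyapunov estimate of Theorem \ref{thm-Y} in stationarity) yield the claim upon letting $n\to\infty$. Two inessential caveats: $\pi_\tau$ has moments only up to order $2p^{\star}$, which is what is implicitly required to cover $l_1\vee l_3\vee l_5$, not ``all orders''; and Theorem \ref{thm-W1} does not give a $\WW_1$-contraction of the numerical semigroup (its bound carries the non-vanishing $C^*\tau|\ln\tau|$ term), so your uniqueness remark does not follow from it --- but since the statement presupposes $\pi_\tau$ is given as an invariant measure, and your main argument only uses its invariance and moments, this does not affect the proof.
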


\section{Estimates of MEMs and Kolmogorov Equation}
\label{sec3}

 In this section, we aim to derive uniform moment estimates for the exact solution of  Eq. \eqref{SDE} and the MEM \eqref{MEM}, along with uniform regularity of the associated Kolmogorov equation.

\subsection{Moment Estimates}

We begin with the following known moment estimates of the solution $\{X_t^{x_0}\}_{t \geq 0}$ of Eq. \eqref{SDE} with the initial value $X_0^{x_0}=x_0$; see e.g. \cite{WW24}.

\begin{thm}
\label{thm-X}
    Let Assumption \ref{A1} hold.
   For  any $x_0 \in L_w^{2p}$ and  $p \in [1,p^{\star}]$, there exists a positive constant $C$, independent of $t \geq 0$, such that
   \begin{align} 
       \ee|X^{x_0}_t|^{2p}
        & \leq  C
       (1+\ee|x_0|^{2p}), \quad t \ge 0,  \label{X-est} \\
        \ee|X_t^{x_0}-X_s^{x_0}|^{2p}
        & \leq  C (1+\ee |x_0|^{2p})
         (t-s)^p, \quad 0 \leq t -s  \leq 1. 
         \label{X-hold}
     \end{align} 
\qed
\end{thm}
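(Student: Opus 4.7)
The plan is to prove both estimates via It\^o's formula combined with a careful exploitation of the coercivity condition \eqref{coe}, following the now-standard scheme for super-linear SODEs.

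\textbf{Step 1: Uniform moment bound.} First I would apply It\^o's formula to $f(x)=|x|^{2p}$, which yields
\begin{align*}
d|X_t|^{2p} = |X_t|^{2p-2}\bigl[2p\langle X_t, b(X_t)\rangle + p\|\sigma(X_t)\|^2\bigr]dt + 2p(p-1)|X_t|^{2p-4}|\sigma(X_t)^\top X_t|^2\, dt + dM_t,
\end{align*}
where $M_t$ is a local martingale. Using $|\sigma^\top x|^2 \le \|\sigma\|^2|x|^2$ the drift is dominated by $|X_t|^{2p-2}\bigl[2p\langle X_t,b(X_t)\rangle + p(2p-1)\|\sigma(X_t)\|^2\bigr]$. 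Since $p\in[1,p^\star]$ forces $p(2p-1) \le p\cdot p^\star(2p^\star-1)$, I can invoke \eqref{coe} multiplied by $2p$ to bound this by $2p|X_t|^{2p-2}\bigl[L_2 - L_3|X_t|^{\gamma+1}\bigr]$. Because $\gamma>1$, Young's inequality absorbs $|X_t|^{2p-2}$ into $\varepsilon|X_t|^{2p+\gamma-1}+C_\varepsilon$, and an elementary split ($|X_t|^{2p+\gamma-1}\ge |X_t|^{2p}-1$) leads to the dissipative differential inequality $\tfrac{d}{dt}\ee|X_t|^{2p}\le C - c\,\ee|X_t|^{2p}$. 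A standard localization by exit times of balls controls the local martingale, and Gronwall's lemma then produces the uniform-in-time bound \eqref{X-est}.

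\textbf{Step 2: Temporal regularity.} Starting from the mild form $X_t - X_s = \int_s^t b(X_r)\,dr + \int_s^t \sigma(X_r)\,dW_r$, I would apply the $C_{2p}$ inequality, then H\"older to the drift integral and the Burkholder--Davis--Gundy inequality to the stochastic integral, obtaining
\begin{align*}
\ee|X_t-X_s|^{2p}\le C(t-s)^{2p-1}\!\int_s^t \ee|b(X_r)|^{2p}\,dr + C(t-s)^{p-1}\!\int_s^t \ee\|\sigma(X_r)\|^{2p}\,dr.
\end{align*}
Polynomial growth of $b$ and $\sigma$, extracted from \eqref{mon}--\eqref{coe} by setting $y=0$ (or simply inherited from Assumption \ref{A2}), together with the uniform moment bound of Step~1 render both integrands bounded by a constant depending on $\ee|x_0|^{2p'}$ for some $p'\ge p$. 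The prefactors $(t-s)^{2p-1}\cdot(t-s) = (t-s)^{2p}$ and $(t-s)^{p-1}\cdot(t-s) = (t-s)^p$ then give the required $(t-s)^p$ after absorbing the former using $t-s\le 1$.

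\textbf{Main obstacle.} The only genuinely delicate point is the coefficient matching in Step~1: \eqref{coe} comes with weight $\tfrac{p^\star(2p^\star-1)}{2}$ on $\|\sigma\|^2$, whereas It\^o's formula naturally delivers weight $\tfrac{p(2p-1)}{2}$. The restriction $p\in[1,p^\star]$ is exactly what allows one to absorb the difference using $\|\sigma\|^2\ge 0$, and it explains the precise range of admissible moment exponents in the statement. Everything else is a routine combination of It\^o, Gronwall, and BDG, which is why the result is merely quoted from \cite{WW24}.
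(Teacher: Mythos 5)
The paper does not prove this theorem; it is quoted from \cite{WW24} and stated with a \qed. Your proposal is the standard argument for such results and is essentially correct: the It\^o computation in Step 1 is right (the coefficient of $\|\sigma\|^2$ produced by It\^o is $p(2p-1)$ after bounding $|\sigma^\top x|^2\le\|\sigma\|^2|x|^2$, and $2p-1\le p^\star(2p^\star-1)$ for $p\le p^\star$ lets you invoke \eqref{coe}), and the localization/Gronwall and BDG steps are routine.

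One caveat worth flagging in Step 2: since $|b(x)|\le C(1+|x|^{\gamma})$ (take $y=0$ in \eqref{b-grow}) and $\|\sigma(x)\|^2\le C(1+|x|)^{\gamma+1}$, your bound on $\ee|b(X_r)|^{2p}$ and $\ee\|\sigma(X_r)\|^{2p}$ requires moments of $X_r$ of order $2p\gamma$ and $p(\gamma+1)$ respectively, hence $x_0\in L_\omega^{2p\gamma}$ rather than $L_\omega^{2p}$, and the resulting constant depends on $\ee|x_0|^{2p\gamma}$. This does not match the literal right-hand side $C(1+\ee|x_0|^{2p})$ of \eqref{X-hold}; the discrepancy lies in the quoted statement rather than in your argument (the paper's own discrete analogue, Lemma \ref{lm-yy}, carries exactly these higher moment requirements), but you should state explicitly which moments of $x_0$ your constant depends on. Also note that, strictly, the higher moments used must stay within the range $[1,p^\star]$ covered by \eqref{X-est}, which imposes $p\gamma\le p^\star$ for the second estimate.
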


Next, we give the uniform moment bounds of the MEM \eqref{MEM}.

\begin{thm}
\label{thm-Y}
    Let Assumptions \ref{A4} and \ref{A5}  hold.
    Then for any $p \in [1,p^{\star}] \cap \nn_+$  and any $\tau\in(0,\tau_{\max})$, there exist a constant $C$, independent of $n, \tau$, such that 
       \begin{align}
   \label{Y-est}
       \ee[|Y_{n+1}|^{2p}|\FFF_{t_n}]&\le (1-K_4\tau/4)|Y_n |^{2p}+C\tau.
\end{align} 
\end{thm}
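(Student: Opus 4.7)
The plan is to write $Z := \PPP(Y_n)$, $B := b_\tau(Z)$, $M := \sum_{j=1}^m \sigma_{j,\tau}(Z)\delta W_n^j$, and $\delta := B\tau + M$, so that $Y_{n+1} = Z + \delta$. Conditionally on $\FFF_{t_n}$, $Z$, $B$, and the diffusion vectors $\sigma_{j,\tau}(Z)$ are deterministic, while $M$ is a centered Gaussian with $\ee[|M|^2\mid\FFF_{t_n}] = \tau S$, where $S := \sum_{j=1}^m|\sigma_{j,\tau}(Z)|^2$, and all higher even moments of $M$ are controlled by powers of $\tau S$. I would then expand
\begin{align*}
|Y_{n+1}|^{2p} = \bigl(|Z|^2 + 2\langle Z,\delta\rangle + |\delta|^2\bigr)^p
\end{align*}
via the multinomial theorem and take $\ee[\cdot\mid\FFF_{t_n}]$ term by term, splitting the result into a leading part of order $\tau$ and a remainder $R_\tau$ collecting all contributions of strictly higher order.

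After grouping the order-$\tau$ contributions and using $\ee[\langle v,M\rangle^2\mid\FFF_{t_n}] \le \tau|v|^2 S$ for any deterministic $v$, the leading part reads
\begin{align*}
|Z|^{2p} + p|Z|^{2(p-1)}\tau\bigl[\,2\langle Z,B\rangle + \tau|B|^2 + (2p-1) S\,\bigr].
\end{align*}
The coefficient $(2p-1)$ in front of $S$ arises from combining the contribution $p|Z|^{2(p-1)}\ee[|M|^2\mid\FFF_{t_n}]$ coming from the multinomial index $(p-1,0,1)$ with the bound $4\binom{p}{2}|Z|^{2(p-2)}\ee[\langle Z,M\rangle^2\mid\FFF_{t_n}] \le 2p(p-1)|Z|^{2(p-1)}\tau S$ coming from the index $(p-2,2,0)$, together with the identity $p+2p(p-1) = p(2p-1)$. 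Since $p\in[1,p^\star]\cap\nn_+$ and $p^\star\ge 1$ force $2p-1 \le p^\star(2p^\star-1) = \binom{2p^\star}{2}$, the key inequality \eqref{mon*} yields
\begin{align*}
2\langle Z,B\rangle + \tau|B|^2 + (2p-1) S \le K_3 - K_4|Z|^2,
\end{align*}
and then Young's inequality $pK_3|Z|^{2(p-1)} \le (pK_4\tau/2)|Z|^{2p}/\tau + C$ bounds the leading part by $(1 - pK_4\tau/2)|Z|^{2p} + C\tau$.

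The main obstacle is the remainder $R_\tau$, whose terms have the schematic form $\tau^a|Z|^b|B|^c S^d$ originating from higher Gaussian moments of $M$ and from the expansion of $|\xi|^{2p}$ with $\xi := Z + B\tau$. To tame these, I would use the polynomial growth $|B| \le C|b(Y_n)| \le C(1+|Y_n|)^\gamma$ provided by Assumption \ref{A4} and \eqref{b-grow}, together with the weak scaling estimate $\sqrt{\tau}S \le K_5 + K_6|Z|^{\alpha_3}$ from \eqref{s-tau} with $\alpha_3<2$, which turns each $(\tau S)^k$ into $\tau^{k/2}(K_5+K_6|Z|^{\alpha_3})^k$ and produces surplus fractional powers of $\tau$. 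Repeated applications of Young's inequality, combined with $\tau \in (0,\tau_{\max})$, then allow each term of $R_\tau$ to be dominated by $\varepsilon\tau|Z|^{2p} + C_\varepsilon\tau$ for an arbitrarily small $\varepsilon > 0$; taking $\varepsilon = K_4/4$ gives $|R_\tau| \le (K_4\tau/4)|Z|^{2p} + C\tau$. Adding this to the leading estimate, noting that $pK_4/2 - K_4/4 \ge K_4/4$ for $p \ge 1$, and invoking $|Z| = |\PPP(Y_n)| \le |Y_n|$ from \eqref{MEM-rel}, delivers $\ee[|Y_{n+1}|^{2p}\mid\FFF_{t_n}] \le (1 - K_4\tau/4)|Y_n|^{2p} + C\tau$. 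The delicacy lies in the careful bookkeeping needed to verify that every one of the many schematic terms in $R_\tau$ can be absorbed in this way, exploiting both parts of Assumption \ref{A5} in concert.
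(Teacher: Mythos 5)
Your leading-order computation is sound and agrees with the paper's treatment of the $O(\tau)$ part, but the handling of the remainder $R_\tau$ (needed for every $p\ge 2$) contains a genuine gap: the claim that each term of the schematic form $\tau^a|Z|^b|B|^cS^d$ can be absorbed into $\varepsilon\tau|Z|^{2p}+C_\varepsilon\tau$ using the growth bound $|B|\le C(1+|Y_n|)^\gamma$ together with \eqref{s-tau} and Young's inequality is unjustified, and in fact fails under Assumptions \ref{A4}--\ref{A5} alone. Consider the remainder term from the multinomial index $(p-2,2,0)$, namely $4\binom{p}{2}|Z|^{2p-4}\tau^2\langle Z,B\rangle^2\le C|Z|^{2p-2}\tau^2|B|^2$. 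Condition \eqref{mon*} controls only the combination $2\langle Z,B\rangle+\tau|B|^2$ (plus the $S$-term), not $|B|$ with any spare power of $\tau$: it only yields $\tau|B|\le 2|Z|+\sqrt{K_3\tau}$, and it is perfectly consistent with, say, $B\approx -Z/\tau$ for $\tau<1/K_4$, in which case $\tau^2\langle Z,B\rangle^2|Z|^{2p-4}\approx|Z|^{2p}$ carries no factor of $\tau$ at all and cannot be dominated by $\varepsilon\tau|Z|^{2p}+C\tau$. Your proposed substitute, $|B|\le C(1+|Y_n|)^\gamma$, does not rescue this: it gives $\tau^2(1+|Y_n|)^{2\gamma}|Z|^{2p-2}\le C\tau^2(1+|Y_n|)^{2p-2+2\gamma}$, and since $2p-2+2\gamma>2p$ this exceeds $\varepsilon\tau|Y_n|^{2p}+C\tau$ for large $|Y_n|$ at fixed $\tau$; note the estimate \eqref{Y-est} is a conditional (pathwise in $Y_n$) inequality, so you cannot appeal to moment bounds on $Y_n$ here. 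The same obstruction recurs for the terms $\tau^{2k}|B|^{2k}|Z|^{2p-2k}$ produced by $|\delta|^{2k}$.

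The paper avoids exactly this by never separating $Z$ from $B\tau$: it expands $|Y_{n+1}|^{2p}$ in powers of the noise around $\xi:=\PPP(Y_n)+b_\tau(\PPP(Y_n))\tau$, so only even conditional moments survive, and then applies \eqref{mon*} to the whole block, $|\xi|^2+p(2p-1)\tau\sum_{j}|\sigma_{j,\tau}(\PPP(Y_n))|^2\le(1-K_4\tau)|\PPP(Y_n)|^2+K_3\tau$, thereby preserving the cancellation between $2\tau\langle Z,B\rangle$ and $\tau^2|B|^2$ that the assumption encodes. The resulting lower powers of $|\PPP(Y_n)|$ and the higher noise moments (controlled through \eqref{s-tau}) are then absorbed not by Young's inequality alone but by a threshold case analysis $|\PPP(Y_n)|\gtrless H(p)$, with \eqref{MEM-rel} invoked only at the end. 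To make your argument work you would have to keep $Z+B\tau$ grouped in all higher-order terms as well, which essentially reduces your proof to the paper's.
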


\begin{proof}
For $n \in \nn_+$ and $\tau \in (0, \tau_{\max})$ with $\tau_{\max}=\min\{1/K_4,1\}$, taking square on both sides of \eqref{MEM} gives
\begin{align*}
|Y_{n+1}|^2
&=
|\PPP(Y_n)
+
b_\tau(\PPP(Y_n)) \tau
+
\sum_{j=1}^m
\sigma_{j,\tau}(\PPP(Y_n))  
\delta W_n|^2 \\
&=|\PPP(Y_n)|^2+2\tau\<\PPP(Y_n),b_\tau(\PPP(Y_n))\>+\sum_{j=1}^m|
\sigma_{j,\tau}(\PPP(Y_n))  
\delta W_n|^2 \\
&\quad +\tau^2|b_\tau(\PPP(Y_n))|^2
+2 \sum_{j=1}^m\<\PPP(Y_n)
+ b_\tau(\PPP(Y_n)) \tau,
\sigma_{j,\tau} (\PPP(Y_n))  \delta W_n \>.
\end{align*}
Taking the conditional expectation $\ee[\cdot|\FFF_{t_n}]$ on both sides of the above  equation and using It\^o isometry, the conditions \eqref{mon*}, \eqref{MEM-rel}, and the equality
\begin{align*}
\ee\Big[2 \sum_{j=1}^m \<\PPP(Y_n)
+ b_\tau(\PPP(Y_n)) \tau, 
\sigma_{j,\tau} (\PPP(Y_n))  \delta W_n \> \Big]=0,
\end{align*}
followed from the facts that $\PPP(Y_n)$ is $\FFF_{t_n}$-measurable and that $\delta W_n$ is independent of $\FFF_{t_n}$, we obtain
\begin{align*}
& \quad \ee[|Y_{n+1}|^2|\FFF_{t_n}] \\
&=|\PPP(Y_n)|^2 + \tau [2\<\PPP(Y_n),b_\tau(\PPP(Y_n))\>+\sum_{j=1}^m|
\sigma_{j,\tau}(\PPP(Y_n))|^2  +
\tau|b_\tau(\PPP(Y_n))|^2] \\
& \le (1-K_4\tau)|\PPP(Y_n)|^2+K_3\tau.
\end{align*}
This completes the proof of \eqref{Y-est} when $p=1$.

For any integer $p \ge 2$, taking $2p$-th power on both sides of  \eqref{MEM}, we deduce
\begin{align*}
&\quad|Y_{n+1}|^{2p}\\
&=
|\PPP(Y_n)
+
b_\tau(\PPP(Y_n)) \tau
+
\sum_{j=1}^m
\sigma_{j,\tau}(\PPP(Y_n))  
\delta W_n|^{2p} \\
&= |\PPP(Y_n)
+
b_\tau(\PPP(Y_n))  \tau|^{2p}  \\
&\quad+
\sum_{k=2}^{2p}
\binom{2p}{k}
|\PPP(Y_n)
+
b_\tau(\PPP(Y_n))  \tau|^{2p-k}(\sum_{j=1}^m
\sigma_{j,\tau}(\PPP(Y_n))  
\delta W_n)^k \\
&\quad
+ 2p|\PPP(Y_n)
+
b_\tau(\PPP(Y_n))  \tau|^{2p-2}
\Big\<\PPP(Y_n)
+
b_\tau(\PPP(Y_n))  \tau,\sum_{j=1}^m
\sigma_{j,\tau}(\PPP(Y_n))  
\delta W_n\Big\>.
\end{align*}
Then taking the conditional expectation  $\ee[\cdot |\FFF_{t_n}]$
 on both sides of the above equation and using It\^o isometry, we obtain
 \begin{align*}
\ee[|Y_{n+1}|^{2p}|\FFF_{t_n}]
&= |\PPP(Y_n)+b_\tau(\PPP(Y_n))  \tau|^{2p}\\
&\quad+p(2p-1) \tau|\PPP(Y_n)+b_\tau(\PPP(Y_n))  \tau|^{2p-2} \sum_{j=1}^m|
\sigma_{j,\tau}(\PPP(Y_n)) |^2\\
&\quad+
\sum_{k=2}^{p}
 \tau^k
\binom{2p}{2k}
|\PPP(Y_n)
+
b_\tau(\PPP(Y_n))  \tau|^{2p-2k}\sum_{j=1}^m|
\sigma_{j,\tau}(\PPP(Y_n))  
|^{2k}.
\end{align*}
For any $ \tau \in (0, \tau_{\max})$, from the condition \eqref{mon*} we have
\begin{align}
\label{y+f+g}
|\PPP(Y_n)
+
b_\tau(\PPP(Y_n)) \tau|^2+p(2p-1)\tau\sum_{j=1}^m|
\sigma_{j,\tau}(\PPP(Y_n))  
|^2
\le
(1-K_4\tau)|\PPP(Y_n)|^2+K_3\tau.
\end{align}
It follows that 
 \begin{align}
 \label{y+b}
 |\PPP(Y_n)
+
b_\tau(\PPP(Y_n))  \tau|^{2p}
&\le \sum_{k=0}^{p}
\binom{p}{k} 
[(1-K_4 \tau)|\PPP(Y_n)|^2]^{p-k}(K_3 \tau)^k \nonumber\\
& \le
(1-K_4 \tau/2)|\PPP(Y_n)|^{2p}-E_1(|\PPP(Y_n)|),
 \end{align}
where  we denote
\begin{align*}
E_1(\xi):=
\frac{1}{2}K_4\tau|\xi|^{2p}-\sum_{k=1}^{p}
\binom{p}{k} \tau^k (K_3)^k|\xi|^{2p-2k},
\quad \xi \in \rr_+.
\end{align*}
Set
\begin{align*}
    H_1(p):= 2p
\begin{pmatrix}
    p\\
     \lceil p/2 \rceil
\end{pmatrix}
(1+K_3 )^p (\min\{1,K_4\})^{-1},
\quad  p \in \nn_+.
\end{align*}
When $\xi >H_1(p)$, for $1\le k \le p$, $E_1(\xi)$ can be bounded below by
\begin{align*}
E_1(\xi) \ge \tau\sum_{k=1}^{p}[\frac{1}{2p}K_4|\xi|^{2p}-
\binom{p}{ \lceil p/2 \rceil}  (1+K_3)^p|\xi|^{2p-2k}
]\ge 0.
\end{align*}
%
%
%
%
%
%
By  \eqref{s-tau} and Young inequality, we obtain from $|\PPP(Y_n) |> H_1(p)$ that 
\begin{align*}
&\ee[|Y_{n+1}|^{2p}|\FFF_{t_n}]\\
&\le (1-K_4\tau)|\PPP(Y_n) |^{2p}+K_3\tau|\PPP(Y_n) |^{2p-2} \\
&\quad
+\sum_{k=2}^{p}
\binom{2p}{2k}
\tau|\PPP(Y_n) |^{2p-2k}( K_5 +K_6|\PPP(Y_n)|^{\alpha_3})^k \\
&\le (1-\frac{1}{2}K_4\tau)|\PPP(Y_n) |^{2p}+ \sum_{k=2}^{p}
\binom{2p}{2k}
\tau2^{k-1}K_5^k|\PPP(Y_n) |^{2p-2k} \\
&\quad+
\sum_{k=2}^{p}
\binom{2p}{2k}
\tau2^{k-1}K_6^k|\PPP(Y_n) |^{2p-2k+\alpha_3k} +
\frac{(2p-2)^{p-1}K_3^p}{K_4^{p-1}p^p}\tau\\
&\le (1-K_4\tau/4)|\PPP(Y_n) |^{2p}+\frac{(2p-2)^{p-1}K_3^p}{K_4^{p-1}p^p}\tau-E_2(|\PPP(Y_n) |)-E_3(|\PPP(Y_n) |),
\end{align*}
 where for any $\xi \in \rr_+$,
 \begin{align*}
E_2(\xi)&:=
\frac{1}{8}K_4\tau|\xi|^{2p}-\sum_{k=2}^{p}
\binom{2p}{2k} \tau^k (2K_5)^k|\xi|^{2p-2k}, \\
E_3(\xi)&:=
\frac{1}{8}K_4\tau|\xi|^{2p}-\sum_{k=2}^{p}
\binom{2p}{2k} \tau^k (2K_6)^k|\xi|^{2p-2k+\alpha_3k}.
\end{align*}
It is not hard to show when $\xi \ge H_2(p):=8(p-1)\binom{2p}{p}(1+2K_5)^p (\min\{1,K_4\})^{-1}$ and 
$\xi \ge H_3(p):=8(p-1)\binom{2p}{p}(1+2K_6)^p (\min\{1,K_4\})^{-1}$, we have $E_2(\xi)\ge 0$ and $E_3(\xi)\ge 0$.

Denote $H(p):=\max\{H_1(p),H_2(p),H_3(p)\}$.
By the above estimates, we obtain from $|\PPP(Y_n)|\ge H(p)$ that
\begin{align*}
\ee[|Y_{n+1}|^{2p}|\FFF_{t_n}]\le
(1-K_4\tau/4)|\PPP(Y_n) |^{2p}+\frac{(2p-2)^{p-1}K_3^p}{K_4^{p-1}p^p}\tau.
\end{align*}
For $|\PPP(Y_n)| < H(p)$, we have
\begin{align*}
\ee[|Y_{n+1}|^{2p}|\FFF_{t_n}]\le
(1-K_4\tau/4)|\PPP(Y_n) |^{2p}+c(p)\tau,
\end{align*}
where 
\begin{align*}
c(p)&:=(|H(p)|^2+K_3\tau)
\sum_{k=1}^{p}\binom{p}{k}K_3^k|H(p)|^{2p-2k}+\frac{(2p-2)^{p-1}K_3^p}{K_4^{p-1}p^p}\tau \\
&\quad
+\sum_{k=2}^{p}
\binom{2p}{2k}
2^{k-1}K_5^k|H(p) |^{2p-2k} +
\sum_{k=2}^{p}
\binom{2p}{2k}
2^{k-1}K_6^k|H(p) |^{2p-2k+\alpha_3k}.
\end{align*}
This completes the proof.
\end{proof}

\subsection{Regularity of Kolmogorov Equation}
\label{subsec:regularity-Kolmogorov-Equation}

To carry out the weak error analysis in terms of 
$|\ee \varphi(X_t)-\ee \varphi(Y_N)|$ for all $\phi \in {\rm Lip}(\rr^d)$, we rely on the Kolmogorov equation associated with Eq. \eqref{SDE}.
Under the previous assumptions, 
the function $u(\cdot, \cdot):[0, \infty) \times \rr^d \rightarrow \rr$ defined by
\begin{align}
\label{def-u}
    u(t, x)
    :=
   \ee \phi(X_t^x)
\end{align}
is the unique classical solution of the Kolmogorov equation \eqref{Komgv}: 
\begin{align}
\label{Komgv}
     \left\{
     \begin{array}{l}
     \partial_t u(t, x)
     =
     D u(t, x) b(x)
     +
     \frac{1}{2}
     \sum_{j=1}^m 
     D^2 u(t, x)
    (\sigma_j(x), \sigma_j(x)), 
     \\
     u(0, x) = \phi(x).
     \end{array}
     \right.
\end{align}

In the following, we derive more delicate estimates of $u$ defined in \eqref{def-u} and its derivatives, which are vital for the subsequent error 
analysis.
%
We begin with the following estimates on the directional (mean-square) derivative $\DD X_t^\cdot$.

\begin{lm}
\label{lm-DX}
Let Assumptions \ref{A1} and \ref{A2} hold. 
For any $q\in[1, p^{\star}], ~ t \ge 0$, and $ x, v_1,v_2,v_3 \in \rr^d$, there exists a constant $C$ such that 
\begin{align}
        | \DD X_t^x v_1 |_{L_\omega^{2q}}
       & \leq 
        e^{C t}|v_1|, \label{DX-est}\\
        |\DD^2 X_t^x(v_1, v_2) |_{L_\omega^{2q}}
        & \leq 
        e^{Ct} 
        [
        \1_{1 < \gamma \le 2}
        +
        \1_{\gamma >2}
       (
        1+
        |x|^{\gamma-2}
        )
        ]
        |v_1|
        |v_2|,\label{D2X-est} \\
\label{D3X-est}
  |\DD^3 X_t^x(v_1, v_2,v_3)|
    _{L_\omega^2}
    & \leq 
     e^{Ct} 
    [
    \1_{1 < \gamma \le 2}
    +
    \1_{\gamma >2}
   (
    1+
    |x|^{\gamma-2}
    )
    ]
    |v_1|
 |v_2|
  |v_3|.
\end{align}
\end{lm}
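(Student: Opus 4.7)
The plan is to exploit the variational equations for the successive derivatives of the flow and apply It\^o's formula, using the differential form of the monotonicity condition \eqref{mon} to absorb all terms that are multilinear in the highest-order derivative, while controlling the inhomogeneous forcing via Assumption \ref{A2} combined with the moment bound \eqref{X-est}.

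First, for \eqref{DX-est}, I would set $\eta_t := \DD X_t^x v_1$ and write the linear SDE
\begin{align*}
\dd \eta_t = Db(X_t^x)\eta_t \,\dd t + \sum_{j=1}^m D\sigma_j(X_t^x)\eta_t \,\dd W^j(t), \qquad \eta_0 = v_1.
\end{align*}
Passing to the limit $y \to x$ along the direction $v_1$ in \eqref{mon}, one gets the pointwise differential monotonicity $\langle v, Db(x)v\rangle + \tfrac{2p^\star-1}{2}\sum_j |D\sigma_j(x)v|^2 \le L_1|v|^2$, which, since $q \le p^\star$, yields $\langle v, Db(x)v\rangle + \tfrac{2q-1}{2}\sum_j |D\sigma_j(x)v|^2 \le L_1|v|^2$. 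Applying It\^o to $|\eta_t|^{2q}$ and taking expectation then produces $\frac{\dd}{\dd t}\ee|\eta_t|^{2q} \le 2qL_1 \ee|\eta_t|^{2q}$, and Gr\"onwall closes the bound.

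For \eqref{D2X-est}, set $\zeta_t := \DD^2 X_t^x (v_1,v_2)$ and differentiate the variational equation once more to obtain
\begin{align*}
\dd\zeta_t = \bigl[D^2 b(X_t^x)(\eta_t^{v_1}\!,\eta_t^{v_2}) + Db(X_t^x)\zeta_t\bigr]\dd t + \sum_j\bigl[D^2\sigma_j(X_t^x)(\eta_t^{v_1}\!,\eta_t^{v_2}) + D\sigma_j(X_t^x)\zeta_t\bigr]\dd W^j(t),
\end{align*}
with $\zeta_0 = 0$. Applying It\^o to $|\zeta_t|^{2q}$, the contribution purely linear in $\zeta_t$ is again tamed by the differential monotonicity to $C|\zeta_t|^{2q}$. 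The mixed and inhomogeneous terms are handled by Young's inequality, which reduces matters to bounding $\ee|D^2 b(X_t^x)(\eta^{v_1}\!,\eta^{v_2})|^{2q}$ and $\ee \|D^2\sigma_j(X_t^x)(\eta^{v_1}\!,\eta^{v_2})\|^{2q}$. Assumption \ref{A2} bounds these pointwise by $C[\1_{\gamma \le 2} + \1_{\gamma>2}(1+|X_t^x|^{\gamma-2})]^{2q}|\eta_t^{v_1}|^{2q}|\eta_t^{v_2}|^{2q}$, at which point H\"older's inequality together with \eqref{X-est} and \eqref{DX-est} (and the hypothesis $q \le p^\star$ chosen large enough so that the necessary moments of $X_t^x$ and $\eta_t$ exist) finishes the bound, after another Gr\"onwall application. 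The third estimate \eqref{D3X-est} is obtained along the same lines: differentiating once more yields an SDE for $\DD^3 X_t^x$ whose inhomogeneity now involves $D^3 b$, $D^3\sigma_j$ acting on triples of $\eta$'s, plus cross terms like $D^2 b(\eta,\zeta)$; the restriction to $L^2_\omega$ simply reflects the sharper integrability budget one can afford after invoking \eqref{DX-est}--\eqref{D2X-est} in H\"older's inequality.

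The main obstacle is bookkeeping: tracking which powers of $|X_t^x|$, $|\eta|$, and $|\zeta|$ appear after applying Young's inequality, and then verifying that $q \le p^\star$ with the uniform moment bound \eqref{X-est} provides just enough integrability. In particular, the dichotomy $\1_{1<\gamma\le 2} + \1_{\gamma>2}(1+|x|^{\gamma-2})$ in the stated bounds is precisely what emerges after H\"older: when $\gamma \le 2$ the second-derivative coefficient of $b,\sigma$ is bounded, whereas when $\gamma>2$ one picks up $\ee|X_t^x|^{(\gamma-2)\cdot 2qr}$ for a suitable H\"older conjugate $r$, which by \eqref{X-est} is uniformly dominated by $C(1+|x|^{(\gamma-2)\cdot 2qr})$, producing the stated $|x|^{\gamma-2}$ dependence after taking the $2q$-th root.
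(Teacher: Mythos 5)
Your proposal is correct and follows essentially the same route as the paper: derive the pointwise differential monotonicity from \eqref{mon}, apply It\^o's formula to the variational equations for $\DD X_t^x$, $\DD^2 X_t^x$, $\DD^3 X_t^x$, absorb the terms involving the highest-order derivative via that monotonicity (with Young's inequality), and control the inhomogeneous forcings through Assumption \ref{A2}, H\"older's inequality, the moment bound \eqref{X-est}, the lower-order estimates \eqref{DX-est}--\eqref{D2X-est}, and Gr\"onwall. The only cosmetic difference is that the paper localizes with stopping times before passing to the limit via Fatou, a technical step you leave implicit.
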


\begin{proof} 

In what follows, without loss of generality, we denote
\begin{align*}
    \eta^{v_1}(t, x)
    :=
    \DD X_t^{x} v_1, 
    \quad 
    \xi^{v_1, v_2}(t, x)
    :=
    \DD^2 X_t^{x}(v_1, v_2), 
\end{align*}
\begin{align*}
    \zeta^{v_1, v_2,v_3}(t, x)
    :=
    \DD^3 X_t^{x}(v_1, v_2,v_3), 
    \quad x, v_1, v_2, v_3\in \rr^d.
\end{align*}

For any $x, y \in \rr^d $, it follows from \eqref{mon} that
\begin{align}
\label{D-coup}
    \< Db(x) y, y\> +
    \frac{2p^{\star}-1}{2}
    \|D\sigma(x)y\|^2\leq L_1|y|^2.
\end{align}
For $\eta^{v_1}(t,x)$, we have
\begin{align*}
\left\{\begin{array}{l}
   \dd \eta^{v_1}(t,x)
    =
    D b(X_t^x) \eta^{v_1}(t, x) 
   \dd t
    +
    \sum_{j=1}^m
    D \sigma_j(X_t^x) \eta^{v_1}(t, x) 
   \dd  W_{j,t}, \\
    \eta^{v_1}(0,x)=v_1.
\end{array}\right.
\end{align*}
Define a stopping time as
$$
\widetilde{\tau}_n^{(1)}
=
\inf
\{
s \geq 0:|\eta^{v_1}(s,x)|>n 
~\text { or }~ 
|X_s^x|>n
\}.
$$
Using the It\^o formula and the Cauchy-Schwarz inequality shows, for $q \in [1,p^{\star}]$,
\begin{align}
\label{1var-Ito}
        &
       \ee
        | \eta^{v_1}(t \wedge \widetilde{\tau}_n^{(1)}, x)|^{2q}\nonumber\\
        & \leq
|v_1|^{2q}
        +
        2q 
       \ee
        \int_0^{t \wedge \widetilde{\tau}_n^{(1)}}
 |\eta^{v_1}( x)|^{2(q-1)} [\< \eta^{v_1}(x), D  b(X^x) \eta^{v_1}(x) \>\nonumber\\
        & \qquad
         + \frac{2q-1}{2} \sum_{j=1}^m
        |D \sigma_j(X^x) \eta^{v_1}(x)  |^2
        ]
       \dd  s\nonumber\\
        & \leq
    |v_1|^{2q}
        +
      C
       \ee \int_0^{t \wedge \widetilde{\tau}_n^{(1)}}
        |\eta^{v_1}(x)|^{2q}
       \dd s.
\end{align}
We omit the integration variable here and after when there is an integration to lighten the notation. 
Using Gronwall inequality and Fatou lemma, we get \eqref{DX-est}.

Similarly, $ \xi^{v_1, v_2}(t, x)$ satisfies $ \xi^{v_1, v_2}(0, x)=0 $ and
\begin{align}
  \dd  \xi^{v_1, v_2}(t, x)
    &= 
   (
        Db(X_t^x) 
        \xi^{v_1, v_2}(t, x)
        +
        D^2 b(X_t^x)
        (
            \eta^{v_1}(t, x), 
            \eta^{v_2}(t, x)
       )
  )
   \dd  t \nonumber \\
    &\quad +
    \sum_{j=1}^m
    (
        D \sigma_j(X_t^x) \xi^{v_1, v_2}(t, x)  +
        D^2 \sigma_j(X_t^x)
        (\eta^{v_1}(t, x), 
        \eta^{v_2}(t, x))
    ) 
   \dd  W_{j, t}.
   \end{align}

Following the same idea as \eqref{1var-Ito} (and introducing an analogous stopping time if necessary), we obtain that, for some $q\in [1, p^{\star})$,
\begin{align}
\label{2var-Ito}
        &
       \ee |\xi^{v_1, v_2} (t, x)|^{2q} \nonumber\\
        & \leq
         2q \ee\int_0^{t } | \xi^{v_1, v_2}(x)|^{2(q-1)}\< \xi^{v_1, v_2}(x),D^2b(X^x)  (\eta^{v_1}(x), \eta^{v_2}(x)) \>\dd  s\nonumber\\
       &\quad+
        2q \ee\int_0^{t } | \xi^{v_1, v_2}(x)|^{2(q-1)}[\< \xi^{v_1, v_2}(x),Db(X^x)   \xi^{v_1, v_2}(x) \>\dd  s\nonumber\\
        & \qquad + \frac{(2q-1)(1+\epsilon)}{2} \sum_{j=1}^m |D \sigma_j(X^x) \xi^{v_1, v_2}(x)|^2
        ]\dd  s \nonumber\\
              & \quad +
      C\ee\int_0^{t }|
        \xi^{v_1, v_2}
        (x)
        |^{2q}\dd s +
      C\sum_{j=1}^m\ee\int_0^{t }|
        D^2 \sigma_j(X^x) 
        \eta^{v_1}(x)
                \eta^{v_2}(x))
        |^{2q}
       \dd  s \nonumber\\
       &\le C\ee\int_0^{t }|\xi^{v_1, v_2}(x)|^{2q}\dd s 
       + C\ee\int_0^{t }|D^2b(X^x)  (\eta^{v_1}(x), \eta^{v_2}(x))|^{2q} \dd s \nonumber\\
       &\quad+ C\sum_{j=1}^m\ee\int_0^{t }|
        D^2 \sigma_j(X^x) 
        \eta^{v_1}(x)
                \eta^{v_2}(x))
        |^{2q} \dd s\nonumber\\
        &=:C\ee\int_0^{t }|\xi^{v_1, v_2}(x)|^{2q}\dd s +I_{11}+I_{12}.
\end{align}
It remains to estimate $I_{11}$ and $I_{12}$.
In the case $1< \gamma \leq 2$,  we employ Assumption \ref{A2},  H\"older inequality, and \eqref{DX-est} to arrive at
\begin{align*}
      I_{11} \leq 
         Ce^{Ct}|v_1|^{2q}|v_2|^{2q}.
\end{align*}
In the case $\gamma >2$, we get
\begin{align*}
I_{11} & \leq 
        C\int_0^{t } |(1+|X^x| )^{\gamma-2} |_{L_\omega^{2\rho_3 q}}^{2q}
        | \eta^{v_1}(x)|_{L_\omega^{2\rho_4 q}} ^{2q}
        |\eta^{v_2}(x)|_{L_\omega^{2\rho_5 q}}^{2q} \dd s\\
        & \leq 
        Ce^{Ct}(1+|x|^{2q(\gamma-2)})|v_1|^{2q}| v_2|^{2q},
\end{align*}
for some positive constants $\rho_j$ satisfying $\sum_{j=1}^31/\rho_j=1$.
In the same manner,
\begin{align*}
       I_{12} & \leq         
       Ce^{Ct} [ \1_{\gamma \in(1,3]}+\1_{\gamma >3}(1+|x|^{q(\gamma-3)})]|v_1|^{2q} |v_2|^{2q}.
\end{align*}
Combining the above two estimates and \eqref{2var-Ito} and using Gronwall inequality, we obtain \eqref{D2X-est}.

For the last term $\zeta^{v_1,v_2,v_3}(t,{x})$, we get
\begin{align*}
        &
       \dd   
        \,\zeta^{v_1, v_2, v_3}(t, x)\\
        & =  
        (
        D b(X_t^x) \zeta^{v_1, v_2, v_3}(t, x)
        +
        D^2 b(X_t^x)
       (
        \eta^{v_1}(t, x), 
        \xi^{v_2, v_3}(t, x)
        )\\
        & \quad +
        D^2b(X_t^x)
       (
        \xi^{v_1, v_3}(t, x), \eta^{v_2}(t, x)
       ) 
        +
        D^2 b(X_t^x)
        (
        \xi^{v_1, v_2}(t, x), \eta^{v_3}(t, x)
        )\\
        & \quad +
        D^3 b(X_t^x)
        (
        \eta^{v_1}(t, x), 
        \eta^{v_2}(t, x), 
        \eta^{v_3}(t, x)
        )
        ) 
       \dd  t \\
        & \quad +
        \sum_{j=1}^m
        (
        D \sigma_j
      (X_t^x) 
        \zeta^{v_1, v_2, v_3}(t, x)
        +
        D^2 \sigma_j(X_t^x)
        (
        \eta^{v_1}(t, x), 
        \xi^{v_2, v_3}(t, x)
        )\\
        & \quad + 
         D^2 \sigma_j(X_t^x)
        (
         \xi^{v_1, v_3}(t, x), \eta^{v_2}(t, x)
         )
         +
         D^2 \sigma_j
         (X_t^x)
         (
         \xi^{v_2, v_3}(t, x), \eta^{v_1}(t, x)
         )\\
         & \quad +
         D^3 \sigma_j
         (X_t^x)
         (
         \eta^{v_1}(t, x), 
         \eta^{v_2}(t, x), 
         \eta^{v_3}(t, x)
         )
         ) 
        \dd  W_{j, t}
         \\
         & = :
         (
         Db
         (X_t^x) \zeta^{v_1, v_2, v_3}(t, x)
         +
         H(X_t^x))\dd  t\\
         & \quad +
         \sum_{j=1}^m
         (
         D \sigma_j(X_t^x) \zeta^{v_1, v_2, v_3}(t, x)
         +
         G_j(X_t^x)
         ) 
        \dd  W_{j, t},
\end{align*}
where
        $\zeta^{v_1, v_2, v_3}(0, x)=0$.
%
%
Then we have
\begin{align}
\label{3var-Ito}
        &\quad \ee|\zeta^{v_1, v_2, v_3}(t , x)|^2\nonumber\\
        & \le
        C \ee   \int_0^{t }|\zeta^{v_1, v_2, v_3}( x)|^2 \dd s
        + C \ee   \int_0^{t }|H(X^x)|^2 \dd s
    +C\sum_{j=1}^m \ee \int_0^t|G_j(X^x)|^2\dd  s\nonumber\\
        &=: C \ee   \int_0^{t }|\zeta^{v_1, v_2, v_3}( x)|^2 \dd s+I_{21}+I_{22}.
        \end{align}
We begin with the estimation of the term $I_{21}$ as follows:
\begin{align*}
        I_{21} & \leq
    C    \int_0^{t }  |
        D^2 b(X^x)
        (
        \eta^{v_1}(x), 
        \xi^{v_2, v_3}(x)
        )
      |_{L_\omega^2}^2 \dd s
      \\
        & \quad +
    C    \int_0^{t } |
        D^2 b(X^x)
        (
        \xi^{v_1, v_3}(x), \eta^{v_2}(x)
        )
      |_{L_\omega^2}^2 \dd s
        \\
        & \quad +C    \int_0^{t }
      |
        D^2 
        b(X^x)
        (\xi^{v_1, v_2}(x), \eta^{v_3}(x)
        )
        |_{L_\omega^2}^2 \dd s
        \\
        & \quad +C    \int_0^{t }
       |
        D^3 b(X^x)
        (
        \eta^{v_1}(x), 
        \eta^{v_2}(x), 
        \eta^{v_3}(x)
        )
        |_{L_\omega^2}^2 \dd s  \\
        &=:  I_{211}+ I_{212}+ I_{213}+ I_{214}.
\end{align*}
We mention that the analysis of $ I_{211}, I_{212}, I_{213}$ is similar.
Taking $ I_{211}$ as example, in the case $1 <\gamma \leq 2$, we employ  H\"older inequality, \eqref{DX-est}, and \eqref{D2X-est} to get, 
\begin{align*} 
    I_{211}  \leq
        C
        e^{Ct}
        |v_1|
         |v_2|
         |v_3|.
\end{align*}
In the case $\gamma>2$, we obtain
\begin{align*}
     I_{211} \leq 
        C
        e^{Ct}
        (1+|x|^{\gamma-2})
        |v_1| |v_2| |v_3|.
\end{align*}
For $I_{214}$, we have
\begin{align*}
       I_{214}
         \leq 
        C
        e^{Ct}
        [
        \1_{\gamma \in(1,3]}
        +
        \1_{\gamma >3}
        (1+|x|^{\gamma-3})
        ]
        |v_1|  |v_2| |v_3|.
\end{align*}
%
%
%
%
That is to say, $I_{214}$  can be  controlled by $I_{211}$. 
Hence, we get
\begin{align*}
      I_{21}
        & \leq
        C
        e^{Ct}
        [
        \1_{1 < \gamma \le 2}
        +
        \1_{\gamma >2}
        (1+|x|^{2\gamma-4})
        ]
        |v_1|^2
        |v_2|^2
        |v_3|^2.
\end{align*}
Similarly, it is not hard to obtain
\begin{align*}
       I_{22}& \leq
        C
        e^{Ct}
        [
        \1_{\gamma \in(1,3]}
        +
        \1_{\gamma >3}
        (1+|x|^{\gamma-3})
        ] |v_1|^2|v_2|^2|v_3|^2.
\end{align*}
Plugging these estimates into \eqref{3var-Ito} yields
\begin{align*}
       \ee
        [
        |\zeta^{v_1, v_2, v_3}
        (t, x)|^2
        ] 
        & \leq 
      C
       \ee
        \int_0^{t }
        |\zeta^{v_1, v_2, v_3}(x)|^{2}
       \dd  s\nonumber\\
        & \quad +
        Ce^{Ct}
        [
        \1_{1 < \gamma \le 2}
        +
        \1_{\gamma >2}
        (1+|x|^{2\gamma-4})
        ]
        |v_1|^2
        |v_2|^2
        |v_3|^2.
\end{align*}
By Gronwall inequality, we conclude \eqref{D3X-est}. 
\end{proof}

In general, if $\phi \in C_b^k(\rr^d)$ and the solution $X^x_t$ is $k$-times differentiable, then $u(x,t)$ is $k$-times differentiable by the chain rule and an explicit formula for the derivatives of $u(x,t)$ that can be given in terms of derivatives of $\phi$ and $X^x_t$. 
However, instead of its derivatives, we want to express the derivative of $u$ in terms of $\phi$ only.
For this purpose, we recall the Bismut--Elworthy--Li (BEL) formula (see \cite{cer01, EKL94, PP23}). 

\begin{lm}
\label{lm-BEL} 
Let $\{X_t^x\}_{t \geq 0}$ be differentiable and $\sigma$ be invertible with inverse $\sigma^{-1}$. 
For any $\varphi \in C_b(\rr^d)$, $t >0$, 
and $x, v \in \rr^d$,
\begin{align} \label{bel}
    \< 
    D(P_t \varphi(x)), v
    \>
    =
    \frac{1}{t} 
   \ee
    \Big[
    \int_0^t
   \<
    \sigma^{-1}(X^x)
    \DD X^x v, \dd W
   \> 
    \varphi(X_t^x)
    \Big].
\end{align} 
Moreover, \eqref{bel} remains true if $\varphi$ is a Borel function with polynomial growth.
\qed
\end{lm}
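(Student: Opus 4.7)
The plan is to prove \eqref{bel} first for smooth test functions $\varphi \in C_b^1(\rr^d)$ via a martingale integration-by-parts (the Bismut trick), and then extend to Borel $\varphi$ of polynomial growth by approximation. For smooth $\varphi$, the starting point is the chain-rule identity
\[
\< D(P_t\varphi(x)), v\> = \ee\bigl[\<\nabla\varphi(X_t^x), \DD X_t^x v\>\bigr],
\]
whose right-hand side must be converted into an expression involving $\varphi$ alone, thereby eliminating the gradient. This is exactly where the invertibility hypothesis on $\sigma$ enters.

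The concrete implementation I would carry out is as follows. Set $U_s := P_{t-s}\varphi(X_s^x)$, which is a martingale on $[0,t]$ with It\^o differential $\<D(P_{t-s}\varphi)(X_s^x), \sigma(X_s^x)\, \dd W_s\>$, and introduce the auxiliary martingale $M_s := \int_0^s \<\sigma^{-1}(X_r^x)\DD X_r^x v,\dd W_r\>$, so that $M_0=0$. Applying It\^o's product rule to $U_s M_s$, taking expectations, and noting that the drift and pure stochastic-integral parts vanish, one is left with the quadratic-covariation term
\[
\ee[U_t M_t] \;=\; \ee\!\int_0^t \<D(P_{t-s}\varphi)(X_s^x),\, \sigma(X_s^x)\sigma^{-1}(X_s^x)\DD X_s^x v\>\dd s \;=\; \ee\!\int_0^t \<D(P_{t-s}\varphi)(X_s^x),\, \DD X_s^x v\>\dd s.
\]
The integrand can be recognized as the value at time $s$ of the tangent martingale obtained by differentiating the identity $P_t\varphi(x) = \ee[P_{t-s}\varphi(X_s^x)]$ in the direction $v$; it therefore has constant expectation equal to $\<D(P_t\varphi)(x),v\>$. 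Hence $\ee[U_t M_t] = t\,\<D(P_t\varphi)(x),v\>$, and since $U_t = \varphi(X_t^x)$ and $U_0M_0=0$, dividing by $t$ yields \eqref{bel} for $\varphi\in C_b^1$.

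To pass from $C_b^1$ to a Borel $\varphi$ with polynomial growth, I would mollify and truncate: pick $\varphi_n\in C_b^1$ with $\varphi_n\to\varphi$ pointwise and $|\varphi_n|\le C(1+|\cdot|^q)$ uniformly in $n$, apply the already-established formula to $\varphi_n$, and then pass to the limit by dominated convergence. Uniform integrability of the right-hand side is secured by the bound $\|\sigma^{-1}\|\le \lambda_0^{-1}$ from Assumption \ref{A3}, the moment estimate \eqref{X-est} on $X_t^x$, and the first-variation bound \eqref{DX-est} on $\DD X_t^x v$ from Lemma \ref{lm-DX}. The main obstacle, in my view, is not the conceptual structure but the integrability checks that support the product-rule step: one must verify that $M_s$ is a true martingale on $[0,t]$ (not merely a local martingale) and that the stochastic Fubini/product-rule manipulations are justified, which again reduces to combining the boundedness of $\sigma^{-1}$ with the uniform $L^2$ control of $\DD X_s^x v$ on $[0,t]$ provided by Lemma \ref{lm-DX}.
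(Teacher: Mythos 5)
The paper offers no proof of Lemma \ref{lm-BEL}: it is stated as a quoted classical result with a \qed and references to \cite{cer01, EKL94, PP23}, so there is no in-paper argument to compare against. Your proof is the standard Elworthy--Li martingale integration-by-parts argument --- the martingale $U_s=P_{t-s}\varphi(X_s^x)$, the auxiliary stochastic integral $M_s=\int_0^s\<\sigma^{-1}(X_r^x)\DD X_r^x v,\dd W_r\>$, the covariation term $\<D(P_{t-s}\varphi)(X_s^x),\DD X_s^x v\>$, and the observation that its expectation is constant in $s$ and equals $\<D(P_t\varphi)(x),v\>$ --- which is precisely the route taken in the sources the paper cites. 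For $\varphi\in C_b^1$ the argument is sound, granted the regularity of $P_{t-s}\varphi$ needed to apply It\^o's formula (available through the Kolmogorov equation \eqref{Komgv} and estimates of the type in Lemma \ref{lm-Du}) and the true-martingale property of $M$, which, as you note, follows from $\|\sigma^{-1}\|\le\lambda_0^{-1}$ and \eqref{DX-est}.

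The one place where your sketch is thinner than it should be is the extension beyond smooth test functions. For a general Borel $\varphi$ of polynomial growth you cannot simply pick $\varphi_n\in C_b^1$ converging pointwise everywhere, and, more importantly, dominated convergence of the right-hand side of \eqref{bel} does not by itself yield the conclusion: differentiability of $P_t\varphi$ is part of what has to be proved, and ``the limit of the derivatives is the derivative of the limit'' needs justification. The standard repair is to write $P_t\varphi_n(x+hv)-P_t\varphi_n(x)=\int_0^h \<D(P_t\varphi_n)(x+rv),v\>\,\dd r$, pass to the limit using a locally uniform bound and continuity in $x$ of the right-hand side of \eqref{bel}, and then reach general Borel $\varphi$ by a monotone-class argument (or by exploiting absolute continuity of $\LL(X_t^x)$, which the ellipticity in Assumption \ref{A3} provides). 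These steps are routine but should be stated; with them, your argument is complete and consistent with the cited proofs.
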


Recalling the fact that $P_t \phi(x)=u(t, x)$ by \eqref{def-u}, the BEL formula \eqref{bel} paves the way to presenting an expression for the derivatives of $u(t, \cdot)$, where it only requires $\phi \in \mathrm{\rm Lip}(\rr^d)$.
Combining Lemma \ref{lm-DX} with the BEL formula \eqref{bel}, we are now in the position to obtain some estimates about the derivatives of $u(t, \cdot), t >0$.

\begin{lm}
\label{lm-Du}
   Let Assumptions \ref{A1}-\ref{A3} hold. 
   For any $t>0$ and $\phi \in {\rm Lip}(\rr^d)$, $u(t, \cdot) \in C_b^3(\rr^d)$ and satisfy the following estimates for any $x,v_1,v_2,v_3 \in \rr^d$.
   \begin{enumerate}
   \item[(1)]
  For $t\in(0,1]$, there exists a positive constant $C$ such that
    \begin{align} 
        |D u(t, x) v_1| & \leq 
        C (1+|x|)|v_1|, \label{Du-est} \\
      |D^2 u(t, x)(v_1, v_2)| 
        & \leq 
        C t^{-1/2}
        [1+
        \1_{1 < \gamma \le 2}
        |x|
        +
        \1_{\gamma >2}
        |x|^{\gamma-1}
        ]
        |v_1| |v_2|,  \label{D2u-est} \\  
        |D^3 u(t, x)(v_1, v_2,v_3)| 
        & \leq  C t^{-1}
        [
        1+
        \1_{1 < \gamma \le 2}|x|
        +
        \1_{\gamma >2}
        |x|^{\gamma-1}
        ]
        |v_1||v_2||v_3|. \label{D3u-est}
    \end{align}
    \item[(2)]
    For $t \in(1, \infty)$, there exist positive constants $C_{\star}$ and $\lambda$ such that
    \begin{align}
    \label{Du-est+}
        |D u(t, x) v_1| & \leq 
        C_{\star} 
        e^{-\lambda(t-1)}
        (1+|x|)
        |v_1|, \\
    \label{D2u-est+}
        |
        D^2 u(t, x)
        (v_1, v_2)
        | 
        & \leq 
        C_{\star} 
        e^{-\lambda(t-1)}
        [1+
        \1_{1 < \gamma \le 2}
        |x|
        +
        \1_{\gamma >2}
        |x|^{\gamma-1}
        ]
        |v_1||v_2|, \\
    \label{D3u-est+}
        |
        D^3 u(t, x)
        (v_1, v_2, v_3)
        | 
        & \leq 
        C_{\star} 
        e^{-\lambda(t-1)}
        [
        1+
        \1_{1 < \gamma \le 2}|x|
        +
        \1_{\gamma >2}
        |x|^{\gamma-1}
        ]
        |v_1||v_2||v_3|.
    \end{align}
    \end{enumerate}
\end{lm}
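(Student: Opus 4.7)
The plan is to split the analysis into the short-time range $t\in(0,1]$ and the long-time range $t>1$. On $(0,1]$, I would bootstrap the Lipschitz regularity of $\phi$ into singular-in-$t$ gradient estimates via the Bismut--Elworthy--Li (BEL) formula (Lemma \ref{lm-BEL}), while for $t>1$ I would combine the semigroup decomposition $u(t,\cdot)=P_{t-1}(u(1,\cdot))$ with the $\WW_1$-exponential contraction of Remark \ref{rk-erg} to promote the short-time bounds into exponentially decaying ones.

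For Part (1), the first-derivative estimate \eqref{Du-est} does not actually require BEL: writing $u(t,x+hv_1)-u(t,x)=\ee[\phi(X_t^{x+hv_1})-\phi(X_t^x)]$ and combining the Lipschitz hypothesis on $\phi$ with the mean-square differentiability of $X_t^{\cdot}$, estimate \eqref{DX-est} of Lemma \ref{lm-DX} yields $|Du(t,x)v_1|\le |\phi|_{\mathrm{Lip}}\,e^{Ct}|v_1|$, which on $(0,1]$ is absorbed into $C(1+|x|)|v_1|$. For the second and third derivatives I would split the semigroup, $u(t,\cdot)=P_{t/2}(P_{t/2}\phi)$ and $u(t,\cdot)=P_{t/3}(P_{t/3}(P_{t/3}\phi))$ respectively, and apply BEL to the outer factor, converting each extra spatial derivative into a stochastic-integral weight and producing the singular prefactors $t^{-1/2}$ and $t^{-1}$. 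Differentiating the resulting expectation in the remaining direction(s) produces terms involving $\DD X^x$, $\DD^2 X^x$, $\DD^3 X^x$, and the derivatives of $\sigma^{-1}$; these are estimated via Cauchy--Schwarz, the Burkholder--Davis--Gundy inequality, the uniform bound $\|\sigma^{-1}\|\le\lambda_0^{-1}$ from Assumption \ref{A3}, the derivative estimates \eqref{DX-est}--\eqref{D3X-est} of Lemma \ref{lm-DX}, and the moment bound \eqref{X-est} of Theorem \ref{thm-X}. The dichotomy $\mathbf{1}_{1<\gamma\le 2}$ versus $\mathbf{1}_{\gamma>2}|x|^{\gamma-1}$ on the right-hand side is inherited from the analogous dichotomy in the bounds for $|\DD^2 X_t^x|$ and $|\DD^3 X_t^x|$.

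For Part (2), the Part-(1) estimates at $t=1$ place $u(1,\cdot)\in C_b^3(\rr^d)$, and the sharper form of the first-derivative bound (namely $|Du(t,x)v_1|\le Ce^{Ct}|v_1|$ arising in the proof of \eqref{Du-est}) shows it is moreover globally Lipschitz. Writing $u(t,\cdot)=P_{t-1}(u(1,\cdot))$ and invoking Kantorovich--Rubinstein duality together with the two-marginal strengthening of \eqref{ex-decay} from \cite{wang20}, I get $|u(t,x)-u(t,y)|\le |u(1,\cdot)|_{\mathrm{Lip}}\,\WW_1(\LL(X_{t-1}^x),\LL(X_{t-1}^y))\le CC_\star e^{-\lambda(t-1)}|x-y|$, yielding \eqref{Du-est+}. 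For \eqref{D2u-est+} and \eqref{D3u-est+}, I would apply the Part-(1) short-time bounds to $P_1$ with ``initial datum'' $u(t-1,\cdot)$, whose global Lipschitz seminorm is controlled by $CC_\star e^{-\lambda(t-2)}$ when $t>2$ (by the just-proved long-time first-derivative estimate) and by a constant when $1<t\le 2$; after adjusting $C_\star$ the prefactor $e^{-\lambda(t-1)}$ is recovered in either case.

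I expect the main technical obstacle to be the Part-(1) second-derivative step: after applying BEL to $u(t,\cdot)=P_{t/2}(P_{t/2}\phi)$, differentiating the integrand $\sigma^{-1}(X_s^x)\DD X_s^x v_1$ in the direction $v_2$ produces $\DD\sigma^{-1}=-\sigma^{-1}(D\sigma)\sigma^{-1}$ paired with $\DD X_s^x v_2\cdot\DD X_s^x v_1$, as well as $\sigma^{-1}(X_s^x)\DD^2 X_s^x(v_1,v_2)$. Balancing the H\"older exponents across the $|x|^{(\gamma-1)/2}$ growth of $D\sigma$ (Assumption \ref{A2}), the $|x|^{\gamma-2}$ growth of $|\DD^2 X^x|$ (Lemma \ref{lm-DX}), and the $(1+|x|)$ linear factor from the Lipschitz control on $P_{t/2}\phi$, so that only the stated polynomial weight $|x|^{\gamma-1}$ (rather than any higher power) survives on the right-hand side, will be the delicate book-keeping that drives the whole argument.
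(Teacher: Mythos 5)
Your proposal is correct, and Part (1) follows essentially the paper's route: the paper likewise writes $u(t,\cdot)=P_{t/2}(P_{t/2}\phi)$, applies the BEL formula to the outer semigroup, and differentiates the resulting representation in the remaining direction, producing exactly the three contributions you anticipate (a term with $Du(t/2,X^x_{t/2})\,\DD X^x v_2$, a term with $\DD^2X^x(v_1,v_2)$ paired with the increment $u(t/2,X^x_{t/2})-u(0,x)$, and a term with $D\sigma^{-1}$); the increment bound $|u(t/2,X^x_{t/2})-u(0,x)|_{L^2_\omega}\le C(1+|x|)\sqrt t$ is what supplies the extra factor of $|x|$ that upgrades the $|x|^{\gamma-2}$ weight of $\DD^2X^x$ to the stated $|x|^{\gamma-1}$ — precisely the book-keeping you flag. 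Your shortcut for \eqref{Du-est} (Lipschitz test function plus the $L^2$ contraction of the flow, no BEL) is a legitimate simplification that in fact yields a bound without the $(1+|x|)$ factor, whereas the paper derives \eqref{Du-est} from BEL as well.

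For Part (2) you take a genuinely different route. The paper subtracts the invariant-measure average, sets $\Phi_t=u(t-1,\cdot)-\pi(\phi)$, uses the one-marginal bound $|\Phi_t(x)|\le C_\star e^{-\lambda(t-1)}(1+|x|)$ coming from \eqref{W1-contrac}, and then invokes the external result \cite[Theorem 4.5]{PWW25} for derivative estimates of $x\mapsto\ee\bar\phi(X_1^x)$ when $\bar\phi$ is merely a Borel function of linear growth (this is why Lemma \ref{lm-BEL} is stated for polynomial-growth $\varphi$). You instead propagate the \emph{Lipschitz seminorm} of $u(s,\cdot)$: the two-marginal contraction $\WW_1(\LL(X_s^x),\LL(X_s^y))\le C_\star e^{-\lambda s}|x-y|$ from \cite{wang20} plus Kantorovich--Rubinstein gives $|u(t-1,\cdot)|_{\rm Lip}\le CC_\star e^{-\lambda(t-2)}$, after which you simply rerun the Part-(1) estimates for $P_1$ with datum $u(t-1,\cdot)$, using that those estimates are linear in the Lipschitz constant of the datum. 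Your version is more self-contained (no appeal to \cite{PWW25}) at the price of needing the two-marginal form of the ergodicity estimate rather than only convergence to $\pi$; both forms are what \cite{wang20} actually proves under Assumption \ref{A3}, so this is a sound trade. Both treatments leave the third-derivative case as a routine repetition, as does the paper.
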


\begin{proof}
For some function $\phi\in \mathrm{\rm Lip}(\rr^d)$, there exists a constant $M$ such that
\begin{align}
\label{tf-lip}
    |\phi(x)-\phi(y)|
    \leq
    M|x-y|,
    \quad \forall x, y \in \rr^d.
\end{align}
Then we can calculate the first,  the second, and the third derivative of
$u(t, x)
    :=
   \ee \phi(X_t^x) $
with respect to $x$, by Lemma \ref{lm-BEL}. 
%
The proof is divided into two cases.

(1)
  When $t \in(0,1]$, we have
\begin{align*}
        D u(t, x) v_1
       & =
        \frac{1}{t} 
       \ee
        \Big[
        \int_0^t
     \<
        \sigma^{-1}(X^x)
        \eta^{v_1}(x), 
       \dd  W
      \> 
        \phi(X_t^x)
        \Big]
        \\
        & =
        \frac{1}{t} 
       \ee
        \Big[
        \int_0^t
     \<
        \sigma^{-1}(X^x)
        \eta^{v_1}(x), 
       \dd  W
      \> (
        \phi(X_t^x)
        -
         \phi(x))
        \Big].
\end{align*}
By Theorem \ref{thm-X}, Lemma \ref{lm-DX}, Assumption \ref{A3}, H\"older inequality and It\^o isometry, one can easily get
\begin{align*}
        |D u(t, x) v_1|
        & \leq 
        \frac{
        |\phi(X_t^x)
        -
        \phi(x)|_{L_\omega^2}}
        {t}
        \Big |
        \int_0^t
       \<
        \sigma^{-1}(X^x)
        \eta^{v_1}(x), 
       \dd  W
       \>
        \Big |_{L_\omega^2}
        \\
        & \leq
        \frac{
        M
        |X_t^x-x|_{L_\omega^2}}
        {t}
        \Big (
        \int_0^t
       |
        \sigma^{-1}(X^x)
        \eta^{v_1}(x)
        |_{L_\omega^2}^2\dd  s
        \Big )^{1 / 2} 
        \\
        & \leq
        C
        \frac{1}{\sqrt{t}}
        (e^{C t}-1)^{1 / 2}
        (1+|x|)
        |v_1|.
\end{align*}
In view of the Markov property of Eq. \eqref{SDE}, we have
\begin{align*}
        u(t, x)
        & =
        P_t f(x)
        =
        P_{t-s}P_s f(x),
\end{align*}
which, by  setting $s=\frac{t}{2}$, leads to the following expression with respect to $D u(t, \cdot)$:
\begin{align*}
    D u(t, x) v_1
    =
    \frac{2}{t} 
   \ee
    \int_0^{\frac{t}{2}}
  \<
    \sigma^{-1}(X^x)
    \eta^{v_1}(x), 
   \dd  W
 \> 
    u(t/2, X_{t/2}^x).
\end{align*}
Since both the functions $u(t, \cdot)$ and $X_t^\cdot $ are continuously differentiable,  the function $u(t / 2, X_{t / 2}^\cdot)$ is also continuously differentiable. 
Applying the chain rule, we obtain the first derivative of $u(t / 2, X_{t / 2}^x)$ as
\begin{align*}
    D
    (
    u(t / 2, X_{t / 2}^x)
    )
    v_2
    =
    D u
    (
    t / 2, X_{t / 2}^x
 ) 
    \eta^{v_2}(t / 2, x),
\end{align*}
which directly implies a formula for the second derivative of $u(t, \cdot)$ as follows:
\begin{align*}
        &
        D^2 u(t, x)(v_1,v_2)\\
        & = \frac{2}{ t} 
       \ee
        \int_0^{\frac{t}{2}}
     \<
        \sigma^{-1}(X^x)
        \eta^{v_1}(x), 
       \dd  W
      \> 
        D u(t/2, X_{t / 2}^x) 
        \eta^{v_2}(t/2,x)\\
        & \quad +\frac{2}{t} 
       \ee
        \int_0^{\frac{t}{2}}
       \<
        \sigma^{-1}(X^x)
        \xi^{v_1, v_2}(x),\dd  W
     \> 
        (
        u(t/2, X_{t / 2}^x)
        -
        u(0,x)
       )\\
        & \quad 
        -
        \frac{2}{t}
       \ee
        \int_0^{\frac{t}{2}}
        \<
        \sigma^{-2}(X^x)
        D \sigma(X^x)
        \eta^{v_2}(x)
        \eta^{v_1}(x), 
       \dd  W
        \> 
        (
        u(t/2, X_{t/2}^x)
        -
        u(0,x)
        )\\
        &=:I_1+I_2+I_3.
\end{align*}
For  $I_1$, using Lemma \ref{lm-DX}, H\"older inequality and the It\^o isometry shows
\begin{align*}
        |I_1| 
         & \le
        \frac{2}{t}
        | 
        D u(t / 2, X_{t / 2}^x) 
        \eta^{v_2}(t / 2, x)
     |_{L_\omega^2}
        (
        \int_0^{\frac{t}{2}}
        |
        \sigma^{-1}(X^x)
        \eta^{v_1}(x)
        |^2_{L_\omega^2} 
       \dd  s
        )^{1 / 2}\\
        & \leq
        \frac{C}{t^{3/2}}
        (e^{ C t}-1)
        (1+|X_{t / 2}^x|_{L_\omega^4})
        |\eta^{v_2}(t / 2, x)|_{L_\omega^4}
        |v_1|\\
        & \leq 
        \frac{C}{t^{3/2}}
        (e^{ C t}-1)
        e^{Ct/2}
        (1+|x|)
        |v_1|
        |v_2|.
\end{align*}
For  $I_2$, we need a bound on 
        $
        |
        u(t/2, X_{t/2}^x)
        -
        u(0,x)
      |
        _{L_\omega^2}$.
According to Theorem \ref{thm-X}, the Jensen inequality and \eqref{tf-lip}, we obtain
\begin{align*}
        |
        u
        (
        t / 2, X_{t / 2}^x
        )
        -
        u(0,x)
        |^2
        _{L_\omega^2}
              & =
       \ee
        |
       \ee
       [
        \phi(X^X_t)
        -
        \phi(x)
        \mid
        X^x_{t/2}
        ]
        |^2
       \\
        & \leq
       \ee
       [
       \ee
      [
        |
        \phi(X^X_t)
        -
        \phi(x)
        |^2
        \mid
        X^x_{t/2}
       ]
        ]\\
        & \leq
        C (1+|x|^2)t.
\end{align*}
In the same manner, we have
\begin{align*}
        |I_2| 
        & \leq
        \frac{C}
        {\sqrt{t}} 
        (
        e^{Ct/2}-1
        )^{1/2}
        [
        1+
        \1_{1 < \gamma \le 2}
        |x|
        +
        \1_{\gamma >2}
        |x|^{\gamma-1}
        ]
        |v_1|
        |v_2|,
        \end{align*}
        and
        \begin{align*}
         |I_3|
        & \leq
        \frac{C}
        {\sqrt{t}}
        (
        e^{Ct}-1
        )^{1/2}
        (1+|x|^{(\gamma-1)/2})
        |v_1|
        |v_2|.
\end{align*}
Taking these estimates into account and using the inequality $e^{C t}-1 \leq (e^C-1)t$ for any $t\in[0,1]$ and $C>0$, we get \eqref{Du-est} and \eqref{D2u-est}.

(2) When $t \in(1, \infty)$,
using Kantorovich's dual formula and Assumption \ref{A3} yields
\begin{align}
\label{W1-contrac}
    \WW_1(
  \LL(X^x_t),\pi)
    :=
    \sup_{|\phi|_{\rm Lip} \leq 1}
    \Big|
   \ee \phi(X_t^x)
    -
    \int_{\rr^d}
    \phi(x)
  \pi( \dd x)
    \Big|
    \leq
    C_{\star} 
    e^{-\lambda t}
    (1+|x|).
\end{align}
Recalling \eqref{def-u}, when $t>1$, the Markov property immediately implies that
\begin{align*}
    u(t, x)
    =
   \ee u(t-1, X_1^x).
\end{align*}
Further, from \eqref{W1-contrac} we arrive at
\begin{align*}
    |
    u(t-1, x)
    -
    \int_{\rr^d} \phi(x) \pi(\dd  x)
    | 
    \leq 
    C_{\star}
    e^{-\lambda (t-1)}
    (1+|x|).
\end{align*}
Inspired by \cite{bre14}, here we choose
\begin{align*}
    \Phi_t(x)
    :=
    u(t-1, x)
    -
    \int_{\rr^d} 
    \phi(x) 
    \pi(\dd  x),
\end{align*}
with
\begin{align*}
    |\Phi_t(x)|
    \leq
    \KK (\Phi_t, x)
    =
    C_{\star}
    e^{-\lambda (t-1)}
    (1+|x|).
\end{align*}
Now, for some function $\bar{\phi}:\rr^d \rightarrow \rr$, there exists $\KK(\bar{\phi},x) \geq 0$, which depends on $\bar{\phi}$ and $x$, such that,
$$
|\bar{\phi}(x)|\leq \KK(\bar{\phi},x), 
\quad \forall x \in \rr^d.
$$
We define 
$V(t, x)
:=
\ee \bar{\phi}(X_t^x)$, 
and its first and the second derivative with respect to $x$ can be founded in \cite[Theorem 4.5]{PWW25}.
Then, by the Markov property, we infer
\begin{align*}
       \ee
        [
        \Phi_t(X_1^x)
        ] 
        & =
       \ee
        [
        u(t-1, X_1^x)
        ]
        -
        \int_{\rr^d}
        \phi(x) 
        \pi(\dd  x)
        =
        u(t, x)
        -
        \int_{\rr^d} 
        \phi(x) \pi(\dd  x),
\end{align*}
which leads to
\begin{align*}
    u(t, x)
    =
   \ee
  [
    \Phi_t(X_1^x)
  ]
    +
    \int_{\rr^d} \phi(x) \pi(\dd  x)
    =
    V(1, x)
    +
    \int_{\rr^d} \phi(x) \pi(\dd  x),
\end{align*}
With the estimates of $DV(t,x)v_1$ and $D^2V(t,x)(v_1,v_2)$ at $t=1$ in \cite[Theorem 4.5]{PWW25}, we obtain \eqref{Du-est+} and \eqref{D2u-est+}.
The estimate of $D^3 u(t,x)(v_1,v_2,v_3)$ is proved similarly and will not be repeated here for brevity.
\end{proof}

To overcome the discontinuity of the MEM \eqref{MEM}, we introduce its continuous interpolation $\{\yy^n(t)\}_{t \in[t_n, t_{n+1}]}$ and $ n \in\{0,1, \cdots, N-1\}$,
\begin{align}
\label{yy}
    \left\{\begin{array}{l}
   \yy^n(s)
    =
   \yy^n(t_n)
    +
    b_\tau
    (
   \yy^n(t_n)
  )(s-t_n)
    +
    \sum_{j=1}^m
    \sigma_{j,\tau}
    (
   \yy^n(t_n)
   )(W_s-W_{t_n}), 
   \\
   \yy^n(t_n)
     =
    \PPP(Y_n) .
\end{array}\right.
\end{align}
It is straightforward to verify that $\yy^n(t_{n+1})=Y_{n+1}$ and the continuous-time extensions $\{\yy^n(t)\}_{t \in[t_n, t_{n+1}]}$, $ n \in\{0,1, \cdots, N-1\}$ are not continuous on the grid points.
Nevertheless, by Theorem \ref{Y-est} and H\"older inequality, we have the following estimates of the process $\yy^n$.

\begin{lm}
\label{lm-yy}
    Let  $p \in \nn_+,j\in \{1,2,\cdots,m\}$, and Assumptions \ref{A2}, \ref{A4}, and \ref{A5} hold. 
    For any $t_n\le s \le t \le t_{n+1} $ with $n \in \nn_+$, the following estimates hold:
    \begin{align*}
       \ee
        |\yy^n  (s)|^{2 p}
             &   \leq C
        (
        1+
       \ee
     |x_0|^{2 p}
        ), \quad \forall~x_0 \in L_w^{2p}, \\
           \ee
            |\yy^n  (s)
            -
           \yy^n
            (t_n)|^{2 p}
           & \leq 
            C (t-s)^p
            (
            1+
           \ee
            |x_0|^{2p\gamma}
                        ),  \quad \forall~x_0 \in L_w^{2p\gamma},  \\
           \ee
            |
            b(\yy^n  (s))
            -
            b(\yy^n  (t_n))
            |^{2 p}
            & \leq 
            C (t-s)^p
            (
            1+
           \ee
            |x_0|^{2p(2\gamma-1)}
           ),\quad \forall~x_0 \in L_w^{2p(2\gamma-1)},  \\
           \ee
            |
            \sigma_j(\yy^n  (s))
            -
            \sigma_j(\yy^n  (t_n))
            |^{2 p}
                        & \leq 
            C (t-s)^p
            (
            1+
           \ee
            |x_0|^{p(3\gamma-1) }
           ),  \quad \forall~x_0 \in L_w^{p(3\gamma-1)}.
    \end{align*}
    \qed
\end{lm}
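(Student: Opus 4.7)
The plan is to handle all four estimates by exploiting that on each subinterval $[t_n,t_{n+1}]$ the process $\yy^n$ is the sum of the $\FFF_{t_n}$-measurable value $\PPP(Y_n)$, a deterministic drift $b_\tau(\PPP(Y_n))(s-t_n)$, and a Brownian stochastic integral with $\FFF_{t_n}$-measurable integrand. The core inputs I would use are: Theorem \ref{thm-Y} iterated in $n$, yielding $\ee|Y_n|^{2q}\le C(1+\ee|x_0|^{2q})$ uniformly in $n$ and $\tau\in(0,\tau_{\max})$ for each required integer $q$; the transfer estimates $|\PPP(Y_n)|\le|Y_n|$, $|b_\tau(\PPP(Y_n))|\le C|b(Y_n)|$, and $|\sigma_{j,\tau}(\PPP(Y_n))|\le C|\sigma_j(Y_n)|$ from Assumption \ref{A4}; and the polynomial growth bounds $|b(y)|\le C(1+|y|^\gamma)$ and $|\sigma_j(y)|^2\le C(1+|y|^{\gamma+1})$ that follow by integration from Assumption \ref{A2}.

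For the uniform $L^{2p}$ bound of Part 1, I would mimic the proof of Theorem \ref{thm-Y} on the partial step from $t_n$ to $s$: raise the defining identity \eqref{yy} to the $2p$-th power, take $\ee[\cdot|\FFF_{t_n}]$, and apply \eqref{mon*} together with $(s-t_n)^2|b_\tau|^2\le(s-t_n)\tau|b_\tau|^2$ and \eqref{s-tau} to absorb all cross-terms and drift/diffusion variances into a dissipative inequality of the form $\ee[|\yy^n(s)|^{2p}|\FFF_{t_n}]\le (1-K_4(s-t_n)/C)|\PPP(Y_n)|^{2p}+C(s-t_n)$; combining with $|\PPP(Y_n)|\le|Y_n|$ and iterating in $n$ gives $\ee|\yy^n(s)|^{2p}\le C(1+\ee|x_0|^{2p})$. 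For Part 2, I split $\yy^n(s)-\yy^n(t_n)$ into its drift and stochastic parts: the drift contributes $(s-t_n)^{2p}|b_\tau(\PPP(Y_n))|^{2p}\le C(s-t_n)^p\tau^p(1+|Y_n|^{2p\gamma})$ by polynomial growth of $b$, while the BDG inequality gives for the stochastic part $C(s-t_n)^p\|\sigma_\tau(\PPP(Y_n))\|^{2p}\le C(s-t_n)^p(1+|Y_n|^{p(\gamma+1)})$; since $p(\gamma+1)\le 2p\gamma$ for $\gamma\ge 1$, taking expectations yields the stated bound with exponent $2p\gamma$.

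For Parts 3 and 4 the main device is a H\"older split. From \eqref{b-grow} I would write $|b(\yy^n(s))-b(\yy^n(t_n))|^{2p}\le C(1+|\yy^n(s)|+|\yy^n(t_n)|)^{2p(\gamma-1)}|\yy^n(s)-\yy^n(t_n)|^{2p}$ and apply H\"older's inequality with the conjugate exponents $q_1=\tfrac{2\gamma-1}{\gamma-1}$ and $q_2=\tfrac{2\gamma-1}{\gamma}$. This pair is chosen so that Part 1 at moment $2p(\gamma-1)q_1=2p(2\gamma-1)$ controls the polynomial pre-factor by $C(1+\ee|x_0|^{2p(2\gamma-1)})$, while Part 2 at moment $2pq_2$ controls the increment by $C(t-s)^{pq_2}(1+\ee|x_0|^{2p\gamma q_2})=C(t-s)^{pq_2}(1+\ee|x_0|^{2p(2\gamma-1)})$; taking the $1/q_i$ roots and invoking $q_1^{-1}+q_2^{-1}=1$ telescopes the product to $C(t-s)^p(1+\ee|x_0|^{2p(2\gamma-1)})$. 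Part 4 follows identically from \eqref{s-grow} with the conjugate pair $q_1=\tfrac{3\gamma-1}{\gamma-1}$, $q_2=\tfrac{3\gamma-1}{2\gamma}$ (which satisfies $\tfrac{\gamma-1}{3\gamma-1}+\tfrac{2\gamma}{3\gamma-1}=1$), balancing both factors precisely at order $|x_0|^{p(3\gamma-1)}$.

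The principal technical obstacle is this H\"older-exponent bookkeeping: the pair $(q_1,q_2)$ must simultaneously satisfy $q_1^{-1}+q_2^{-1}=1$ and produce the \emph{same} target moment $M$ of $|x_0|$ from both factors, so that the product collapses to the first power of $1+\ee|x_0|^M$ rather than to a square; together with matching the $(t-s)^p$ factor these constraints determine $(q_1,q_2,M)$ uniquely. A secondary concern is that Theorem \ref{thm-Y} is stated for $p\in[1,p^\star]\cap\nn_+$, so invoking it at the non-integer orders $2p\gamma$, $2p(2\gamma-1)$ and $p(3\gamma-1)$ required here implicitly assumes $p^\star$ is sufficiently large relative to $p$, or else one rounds up to the nearest admissible integer moment via a final Jensen interpolation.
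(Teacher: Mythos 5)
Your proposal is correct and follows exactly the route the paper intends: the paper omits the proof entirely, stating only that the lemma follows from the moment bounds of Theorem \ref{thm-Y} and the H\"older inequality, which is precisely what you carry out, and your conjugate-exponent bookkeeping (e.g.\ $q_1=\tfrac{2\gamma-1}{\gamma-1}$, $q_2=\tfrac{2\gamma-1}{\gamma}$ for the drift increment and $q_1=\tfrac{3\gamma-1}{\gamma-1}$, $q_2=\tfrac{3\gamma-1}{2\gamma}$ for the diffusion increment) reproduces the stated moments of $x_0$ exactly. Your two caveats — that the bound is really in terms of $(s-t_n)^p$ rather than the $(t-s)^p$ printed in the statement, and that fractional moment orders require rounding up to an admissible integer in Theorem \ref{thm-Y} — are both legitimate observations about the paper's statement rather than gaps in your argument.
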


\section{Weak Convergence Analysis and Proof of Theorem \ref{thm-W1}}
\label{sec4}

Equipped with  uniform moment estimates of Eq. \eqref{SDE} and the MEM \eqref{MEM}, along with uniform regularity of the associated Kolmogorov equation,
this section aims to prove Theorem \ref{thm-weak} and  bound the $\WW_1$ distance between the law of the MEM \eqref{MEM} and the target distribution induced by Eq. \eqref{SDE} as follows:
  \begin{align*}
        \WW_1
        (
      \LL(Y^{x_0}_N),\pi
        )
        & \leq
        \WW_1
        (
      \LL(X^{x_0}_{N\tau}),
        \pi
        )
        +
        \WW_1
        (
      \LL(Y^{x_0}_N),
      \LL(X^{x_0}_{N\tau})
        )
        \nonumber\\
        & =
         \WW_1(\LL(X^{x_0}_{N\tau}),\pi)+
        \sup_{|\phi|_{\rm Lip}\leq1}|  \ee \phi(Y^{x_0}_N)-\ee \phi(X^{x_0}_{N\tau}) |.
\end{align*}

For $x_0 \in L^2(\Omega; \rr^d)$, by \eqref{ex-decay}, Theorem \ref{thm-X}, and the definition of $\WW_1$-distance, we get 
\begin{align}
\label{ex-decay-x}
    \WW_1
    (
  \LL(X^{x_0}_t), \pi)
     & \leq
    C_{\star} e^{-\lambda t}
    \WW_1
    (
  \LL(x_0), \pi)  
     \leq 
    C_{\star} e^{-\lambda t}
    ( \ee
    [1+|x_0|^{2}]
    )^{1/2}, \quad t \ge 0.
\end{align}

To facilitate the subsequent analysis, we define the following terms:
\begin{align*}
         J_1&=:
        \sum_{n=0}^{N-1} 
       \ee
        [
        u(T-t_n,\yy^n_n)
       ]
        -
       \ee
       [
        u(T-t_n, Y_n)
        ],\\
        J_{21}&=:
        \sum_{n=0}^{N-1} 
       \ee
        \int_{t_n}^{t_{n+1}}
        D u(T-s,\yy^n)
       (
        b_\tau(\yy^n_n)
        -
        b(\yy^n  )
       ) 
       \dd  s,\\
        J_{22}
       & =:
        \frac{1}{2}
        \sum_{n=0}^{N-1} 
        \sum_{j=1}^m
       \ee
        \int_{t_n}^{t_{n+1}}
        D^2 u
        (
        T-s,\yy^n
       )
        (
        \sigma_{j,\tau}
       (
       \yy^n_n
        ), 
        \sigma_{j,\tau}
        (
       \yy^n_n
        )
        )  \\
        &
        \qquad
        \qquad  -
        D^2 u
       (
        T-s,\yy^n
     )
        (
        \sigma_{j}
        (
       \yy^n
      ), 
        \sigma_{j}
      (
       \yy^n
       )
        )
       \dd s.
\end{align*}
In the following, we provide the relevant estimates.

\begin{lm}
\label{lm-J1}
Let Assumptions \ref{A1}-\ref{A5} hold. 
For any $x_0 \in L_w^{2\alpha_2}$, there exists a positive constant $C$ such that
\begin{align}
\label{J1-est}
        |J_1|
        & \leq
        C
        (
        1+
        |x_0|^{\alpha_2+1}
        _{L_\omega^{2\alpha_2}
        }
        ) 
        \tau.
\end{align}

\end{lm}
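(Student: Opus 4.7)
The plan is to recognize that, by the definition \eqref{yy} of the continuous interpolation, $\yy^n(t_n) = \PPP(Y_n)$, so each summand of $J_1$ is $\ee[u(T-t_n, \PPP(Y_n)) - u(T-t_n, Y_n)]$, and to control this by the mean value theorem together with the gradient estimates for $u$ from Lemma~\ref{lm-Du}. The key observation is that Assumption~\ref{A4} supplies a factor $\tau^2$ via $|\PPP(Y_n) - Y_n| \le C\tau^2|Y_n|^{\alpha_2}$, one of which will be absorbed by the summation over $n = 0, \dots, N-1$ thanks to the exponential decay of $Du(T-t_n, \cdot)$ for $T - t_n > 1$ provided by Lemma~\ref{lm-Du}(2).

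Concretely, I would write
\begin{align*}
u(T-t_n, \PPP(Y_n)) - u(T-t_n, Y_n) = \int_0^1 Du\bigl(T-t_n, Y_n + \theta(\PPP(Y_n)-Y_n)\bigr)(\PPP(Y_n)-Y_n)\,\dd\theta
\end{align*}
and split the outer sum according to whether $T - t_n \le 1$ or $T - t_n > 1$. For short-time indices Lemma~\ref{lm-Du}(1) bounds $|Du(T-t_n, x)| \le C(1+|x|)$; for long-time indices Lemma~\ref{lm-Du}(2) gives $|Du(T-t_n, x)| \le C_\star e^{-\lambda(T-t_n-1)}(1+|x|)$. Since $|\PPP(Y_n)| \le |Y_n|$ by \eqref{MEM-rel}, the intermediate point has norm at most $|Y_n| + C\tau^2|Y_n|^{\alpha_2}$, and for $\tau \le 1$ and $\alpha_2 \ge 2$ one obtains the convenient bound $(1+|Y_n|+C\tau^2|Y_n|^{\alpha_2})|Y_n|^{\alpha_2} \le C(1+|Y_n|^{\alpha_2+1})$. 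Taking expectations and applying Theorem~\ref{thm-Y} (with an integer $p \geq (\alpha_2+1)/2$) together with Jensen's inequality yields
\begin{align*}
\ee|Y_n|^{\alpha_2+1} \le \bigl(\ee|Y_n|^{2\alpha_2}\bigr)^{(\alpha_2+1)/(2\alpha_2)} \le C\bigl(1 + |x_0|_{L_\omega^{2\alpha_2}}^{\alpha_2+1}\bigr)
\end{align*}
uniformly in $n$.

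To close the argument, I would sum the exponential tail via $\sum_{n \,:\, T-t_n > 1} e^{-\lambda(T-t_n-1)} \le (1-e^{-\lambda\tau})^{-1} \le C/\tau$, while the number of short-time indices is at most $\lceil 1/\tau \rceil$; together these furnish the compensating factor $1/\tau$ that converts the prefactor $\tau^2$ into the advertised order $\tau$. The main conceptual obstacle is precisely this compensation: without the long-time exponential decay of $Du$ guaranteed by Lemma~\ref{lm-Du}(2) (itself inherited from the contractivity-at-infinity Assumption~\ref{A3}), one would at best obtain an $O(T\tau)$ bound, destroying the uniform-in-time strategy that underlies \eqref{eq:intro-W1}. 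A minor bookkeeping point is to verify that the required moment order $2\alpha_2$ of $Y_n$ falls within the admissible range $p \in [1, p^\star]\cap\nn_+$ of Theorem~\ref{thm-Y}, which is implicit in the hypothesis $x_0 \in L_\omega^{2\alpha_2}$.
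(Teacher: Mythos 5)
Your proposal is correct and follows essentially the same route as the paper: a first-order Taylor/mean-value expansion of $u(T-t_n,\cdot)$ between $Y_n$ and $\PPP(Y_n)$, the split of the sum at $T-t_n=1$ so that Lemma~\ref{lm-Du}(2) supplies the exponential decay for the long-time block and Lemma~\ref{lm-Du}(1) handles the $O(1/\tau)$ short-time indices, with the $\tau^2$ from $|\PPP(Y_n)-Y_n|\le C\tau^2|Y_n|^{\alpha_2}$ and the uniform moment bounds of Theorem~\ref{thm-Y} yielding the final $O(\tau)$ rate. The only cosmetic difference is that you apply Jensen to $\ee|Y_n|^{\alpha_2+1}$ directly whereas the paper splits $(1+|Y_n|)$ and $|Y_n|^{\alpha_2}$ by H\"older; this is immaterial.
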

\begin{proof}
Set $N_1 \in \nn$ such that $N_1 \tau\leq 1< (N_1+1) \tau$. Then using Taylor expansion and Lemma \ref{lm-Du} yields
\begin{align*}
        |J_1| 
        & \leq  
        \sum_{n=0}^{N-N_1-1}
       \ee
        \int_0^1 
        |
        D u(T-t_n, v_1)
        (\yy^n_n-Y_n) 
        |
       \dd  \bar{r} 
       \\
        & \quad +
        \sum_{n=N-N_1}^{N-1}
       \ee 
        \int_0^1 
        |
        D u(T-t_n, v_1)(\yy^n_n-Y_n) 
        |
       \dd  \bar{r}\\
       &=:J_{11}+J_{12},
\end{align*}
where $v_1(\bar{r}):=Y_n+\bar{r}(\yy^n  _n-Y_n), \bar{r} \in[0,1]$.
By Lemma \ref{lm-Du},   Theorem \ref{Y-est}, and  H\"older inequality, the estimates of $J_{11}$ and $J_{12}$ can be bounded as
\begin{align*}
        J_{11} 
        & \leq  
        C_{\star}
        \sum_{n=0}^{N-N_1-1} 
        e^{-\lambda (T-t_n-1)}
       \ee
        \int_0^1
      (
        1+|v_1)|
        )
        |\yy^n_n
        -Y_n|
       \dd  \bar{r}
    \\
        & \leq
        C_{\star}
        \sum_{n=0}^{N-N_1-1} 
        e^{-\lambda (T-t_n-1)}
        (
        1+
        \sup_{r \ge 0}
        |Y_r|
        _{L_\omega^2}
        )
      |
       \yy^n_n-Y_n
       |
        _{L_\omega^2}
     \\
        & \leq
        C_{\star} \tau^2
        \sum_{n=0}^{N-N_1-1} 
        e^{-\lambda (T-t_n-1)}
        \sup_{r \ge 0}
        (
    (
        1+
        |Y_r|
        _{L_\omega^2}
       )
        |Y_r|^{\alpha_2}
        _{L_\omega^{2\alpha_2}}
        )\\
        & \leq
        C (1+
        |x_0|^{\alpha_2+1}
        _{L_\omega^{2\alpha_2}}
        ) 
        \tau,
\end{align*}
and 
\begin{align*}
        J_{12} 
                 \leq
        C
        (1+
        |x_0|^{\alpha_2+1}
        _{L_\omega^{2\alpha_2}}
        ) 
        \tau,
\end{align*}
where it is straightforward to obtain that
\begin{align*}
    \sum_{n=0}^{N-N_1-1}
    e^{-\lambda(T-t_n-1)} \tau \quad \text { and }
    \quad \sum_{n=N-N_1}^{N-1} \tau
\end{align*}
are uniformly bounded with respect to $T$.
The desired assertion \eqref{J1-est} follows. 
\end{proof}

\begin{lm}
\label{lm-J21}
Let Assumptions \ref{A1}-\ref{A5} hold. 
For any $x_0 \in L_w^{(4\gamma) \vee (6\gamma-4)}$,
there exists a positive constant $C$ such that
\begin{align}
\label{J21-est}
 |J_{21}|
        \leq
        C
        [
        1+
        \1_{1 < \gamma \le 2}
        |x_0|_{L_\omega^{4\gamma}}^{3\gamma}
        +
        \1_{\gamma >2}
         |x_0|_{L_\omega^{6\gamma-4}}^{4\gamma-2}] \tau.
\end{align}
\end{lm}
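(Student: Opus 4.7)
I would split the integrand through
\begin{align*}
b_\tau(\yy^n_n)-b(\yy^n)
=\bigl[b_\tau(\yy^n_n)-b(\yy^n_n)\bigr]+\bigl[b(\yy^n_n)-b(\yy^n)\bigr],
\end{align*}
and write $J_{21}=J_{21}^{(1)}+J_{21}^{(2)}$ accordingly. The first piece is the taming error, which Assumption \ref{A4} controls pointwise by $C\tau(1+|Y_n|^{\alpha_1})$; pairing with the $D u$ bounds of Lemma \ref{lm-Du} and the uniform-in-time moment estimate of Theorem \ref{thm-Y}, then summing separately over the far-from-terminal regime $n<N-N_1$ (where the factor $e^{-\lambda(T-s-1)}$ gives a geometric series bounded by $C/\tau$) and the near-terminal regime $n\ge N-N_1$ (with at most $N_1\sim 1/\tau$ terms), this piece is of size $O(\tau)$ times a moment of $x_0$ compatible with the exponents in \eqref{J21-est}.

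The delicate piece is $J_{21}^{(2)}$: a direct Cauchy--Schwarz using Lemma \ref{lm-yy} only yields order $\tau^{1/2}$ per summand, hence $O(\tau^{1/2})$ in total, so I must exploit martingale cancellation. The plan is to apply It\^o's formula to $b(\yy^n(s))-b(\yy^n_n)$, recalling that $\yy^n$ solves an SDE with drift $b_\tau(\yy^n_n)$ and diffusion $\sigma_{j,\tau}(\yy^n_n)$ that are $\FFF_{t_n}$-measurable:
\begin{align*}
b(\yy^n(s))-b(\yy^n_n)
&=\int_{t_n}^s D b(\yy^n(r))\,b_\tau(\yy^n_n)\,\dd r
+\tfrac{1}{2}\int_{t_n}^s\sum_{j=1}^m D^2 b(\yy^n(r))(\sigma_{j,\tau}(\yy^n_n),\sigma_{j,\tau}(\yy^n_n))\,\dd r\\
&\quad+\sum_{j=1}^m\int_{t_n}^s D b(\yy^n(r))\sigma_{j,\tau}(\yy^n_n)\,\dd W_{j,r}.
\end{align*}
The two Lebesgue integrals combine with $D u(T-s,\yy^n(s))$ to give a double time integral of order $\tau^{3/2}$ per step, which after telescoping (again splitting $n \lessgtr N-N_1$) sums to $O(\tau)$, the polynomial growth in $|x_0|$ being governed by \eqref{Db-v}--\eqref{b-grow} and the bound \eqref{s-tau} together with Theorem \ref{thm-Y}.

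The main obstacle is the It\^o-integral term. To avoid losing a factor of $\tau^{1/2}$, I would insert the Taylor decomposition $D u(T-s,\yy^n(s))=D u(T-s,\yy^n_n)+\bigl[D u(T-s,\yy^n(s))-D u(T-s,\yy^n_n)\bigr]$. The first piece is $\FFF_{t_n}$-measurable and, when paired with the martingale $\int_{t_n}^s D b(\yy^n(r))\sigma_{j,\tau}(\yy^n_n)\dd W_{j,r}$, has vanishing expectation. The residual is controlled by the second-order estimate \eqref{D2u-est}--\eqref{D2u-est+} of $D^2 u$: writing
\begin{align*}
\bigl|D u(T-s,\yy^n(s))-D u(T-s,\yy^n_n)\bigr|\le |D^2 u(T-s,\xi)|\,|\yy^n(s)-\yy^n_n|
\end{align*}
and applying Lemma \ref{lm-yy} for $|\yy^n(s)-\yy^n_n|_{L_\omega^{2q}}$, H\"older, and It\^o isometry on the stochastic integral, supplies the missing $\tau^{1/2}$ factor. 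Tracking the growth exponents through this nested Taylor expansion is where the two cases $1<\gamma\le 2$ and $\gamma>2$ arise, because $D^2u$ carries $|x|^{\gamma-1}$ only in the latter regime; combining with the higher-order moments of $\yy^n$ and $Y_n$ from Theorem \ref{thm-Y} yields the exponents $3\gamma$ (respectively $4\gamma-2$) and the $L_\omega^{4\gamma}$ (respectively $L_\omega^{6\gamma-4}$) integrability requirements on $x_0$. Finally, summing over $n$ using the usual geometric-plus-boundary splitting based on whether $T-t_n\gtrless 1$ produces the factor $\tau$ and completes the bound \eqref{J21-est}.
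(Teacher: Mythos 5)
Your proposal is correct and follows essentially the same route as the paper: the same split into taming error and time-discretization error, the same martingale cancellation via the $\FFF_{t_n}$-measurability of $Du(T-s,\yy^n_n)$, the same $O(\tau^{1/2})\times O(\tau^{1/2})$ pairing for the residual involving $D^2u$, and the same geometric/boundary splitting of the time sum. The only (cosmetic) difference is that the paper expands $b(\yy^n(s))$ by a first-order Taylor formula with integral remainder $\RR_b$ controlled via \eqref{Db-v}, whereas you use It\^o's formula, whose drift and second-order correction terms play exactly the role of that remainder.
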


\begin{proof}
A further decomposition is introduced for $J_{21}$:
\begin{align*}
 J_{21}
        & = 
        \sum_{n=0}^{N-1}\ee \int_{t_n}^{t_{n+1}}D u(T-s,\yy^n )
        ( b_\tau(\yy^n_n )- b(\yy^n_n )) \dd  s\\
        & \quad +
   \sum_{n=0}^{N-1}\ee \int_{t_n}^{t_{n+1}}D u(T-s,\yy^n )
        ( b(\yy^n_n )- b(\yy^n )) \dd  s \\
        &=:J_{211}+J_{212}.
\end{align*}
For $J_{211}$, we have the following decomposition:
\begin{align*}
        |J_{211}| 
        & \leq
        \Big[
         \sum_{n=0}^{N-N_1-2}  
          +
           \sum_{n=N-N_1-1}^{n=N-N_1-1}
         +
          \sum_{n=N-N_1}^{N-1}  
                     \Big]
       \ee
        \int_{t_n}^{t_{n+1}} \\
        &\qquad
        |
        D u
      (
        T-s,\yy^n
       ) 
       (
        b_\tau(\yy^n_n)
        -
        b(\yy^n_n)
      )
        |
       \dd  s\\
               &=:J_{211}^1+J_{211}^2+J_{211}^3.
\end{align*}
For  $J_{211}^1$, using  Lemma \ref{lm-Du}, Theorem \ref{Y-est},  and H\"older inequality, we have
\begin{align*}
        J_{211}^1
        & \leq
        C_{\star}
        \sum_{n=0}^{N-N_1-2}
       \ee
        \int_{t_n}^{t_{n+1}}
        e^{-\lambda (T-s-1)}
      (
        1+|\yy^n |
      )
     |
        b_\tau
        (
       \yy^n_n
        )
        -
        b
        (
       \yy^n_n
        )
     |
       \dd  s\\
        & \leq
        C_{\star} \tau
        \sum_{n=0}^{N-N_1-2} 
        \int_{t_n}^{t_{n+1}}
        e^{-\lambda (T-s-1)}
        \sup_{r \ge 0}
        (
        (
        1+
        |Y_r|
        _{L_\omega^2
      }
        )
        |Y_r|^{\alpha_1}
        _{L_\omega^{2\alpha_1}}
        )
       \dd  s\\
        & \leq
        C_{\star}
        (
        1+
        |x_0|^{\alpha_1+1}
        _{L_\omega^{2\alpha_1}
        }
        )
        \tau.
\end{align*}
For $J_{211}^2$, one can derive
\begin{align*}
        J_{211}^2
        & \leq 
       \ee
        \int_{(N-N_1-1) \tau}^{N \tau-1}
        |
        D u
     (T-s,\yy^{N-N_1-1})
        (
        b_\tau
        (
       \yy^{N-N_1-1}_{N-N_1-1}
       )
        -
       b(
       \yy^{N-N_1-1}_{N-N_1-1}
      )
        )
        |
       \dd  s\\
        & \quad +
       \ee
        \int_{N \tau-1}^{(N-N_1) \tau} 
        |
        D u(T-s,\yy^{N-N_1-1})
        (
        b_\tau
       (
       \yy^{N-N_1-1}_{N-N_1-1})\ -
        b(
       \yy^{N-N_1-1}_{N-N_1-1} )
        )
        | \dd s\\
        & \leq
        C
        (
        1+
        |x_0|^{\alpha_1+1}
        _{L_\omega^{2\alpha_1}
       }
        )
        \tau^2.
\end{align*}
Similarly, we get
\begin{align*}
        J_{211}^3
         \leq
        C
        (
        1+
        |x_0|^{\alpha_1+1}
        _{L_\omega^{2\alpha_1}
       }
        )
        \tau.
\end{align*}
Putting all the above three estimates together, we obtain 
\begin{align*}
       |J_{211}|
         \leq
        C
        (
        1+
        |x_0|^{\alpha_1+1}
        _{L_\omega^{2\alpha_1}
       }
        )
        \tau.
\end{align*}
We  now  estimate $J_{212}$ as follows:
\begin{align*}
        |J_{212}|
        & \leq |
        \sum_{n=0}^{N-1} 
       \ee
        \int_{t_n}^{t_{n+1}}
        D u
        (
        T-s,\yy^n_n
        )
        (
        b
        (
       \yy^n_n
        )
        -
        b
        (
       \yy^n)
        ) 
       \dd  s
        |\\
        & \quad + 
        |
        \sum_{n=0}^{N-1} 
       \ee
        \int_{t_n}^{t_{n+1}}
        (
        D u
        (
        T-s,\yy^n
        )
        -
        D u
        (
        T-s,\yy^n_n
        )
        )
        (
        b
        (
       \yy^n_n
        )
        -
        b
        (
       \yy^n)
        ) 
       \dd  s
        |\\
        &=:J^{1}_{212}+J^{2}_{212}.
\end{align*}
For $J^{1}_{212}$, we get
\begin{align*}
        &\quad J^{1}_{212}\\
        & \leq
        [
         \sum_{n=0}^{N-N_1-2}  
          +
           \sum_{n=N-N_1-1}^{n=N-N_1-1}
         +
          \sum_{n=N-N_1}^{N-1}  
                     ]
                     | \ee \int_{t_n}^{t_{n+1}}
        D u(T-s,\yy^n_n)( b( \yy^n_n )-b( \yy^n) ) \dd  s|\\
               &=:T_1+T_2+T_3.
\end{align*}
To bound on  $J^{1}_{212}$, we use the Taylor expansion to obtain 
\begin{align*}
       b(\yy^n (s))
        & =
       b(
       \yy^n_n) 
        +
        D b(\yy^n _n)
        (
       \yy^n(s)
        -
       \yy^n_n
        )
        +
       \RR _b
        (
       \yy^n(s),
       \yy^n_n
        )\\
        & =
        b(
       \yy^n_n) 
        +
        D b(\yy^n _n)
        (
        b_\tau
        (\yy^n _n)
        (s-t_n)\\
        & \quad +
        \sum_{j=1}^m
        \sigma_{j,\tau}
        (\yy^n (t_n))
        (W_s-W_n)
        )
         +
       \RR _b
        (
       \yy^n(s),
       \yy^n_n
        ),
\end{align*}
where
\begin{align*}
       \RR _b
        (
       \yy^n(s),
       \yy^n_n
        )
         :=
        \int_0^1
        (
        Db(\yy^n _n
        +
        r
        (\yy^n (s)-\yy^n _n)
      )
        -
        D b(\yy^n _n)
        )
        (
       \yy^n(s)
        -
       \yy^n_n
        ) 
       \dd r.
\end{align*}
Moreover, by \eqref{Db-v}, Theorem  \ref{thm-Y}, one can further apply  H\"older inequality to deduce
\begin{align*}
  &|
   \RR _b
    (
   \yy^n(s),
   \yy^n_n
   )
    |
    _{L_\omega^2}\\
    & \leq 
    C 
    \int_0^1
    |
    (
    1+
   |
    r\yy^n(s)
    +
    (1-r)\yy^n_n |
    +
    |
   \yy^n_n
   |
    )^{\max \{0, \gamma-2\}}
    | 
   \yy^n(s)
    -\yy^n _n
   |^2
    |_{L_\omega^2}
   \dd  r \\
    & \leq 
        C
        (
        \1_{1 < \gamma \le 2}
       |
       \yy^n(s)-\yy^n _n|
        _{L_\omega^4}^2
        +
        \1_{\gamma >2}
         \int_0^1
      |
       (1+|r\yy^n(s)+(1-r)\yy^n_n
       |
        \nonumber\\
        & \qquad 
        +
      |
       \yy^n_n
       |
       )^{\gamma-2}
        | 
       \yy^n(s)-\yy^n _n
       |^2
       \big |_{L_\omega^2}
       \dd  r
        ) \\
        & \leq 
        C
        [
        1+
        (
        \1_{1 < \gamma \le 2}
        |x_0|_{L_\omega^{4\gamma}}^{2\gamma}
        +
        \1_{\gamma >2}
        |x_0|
        _{L_\omega^{6\gamma-4}}
        ^{3\gamma-2}
        )
        ] 
        \tau. 
        \end{align*}
For $T_1$,  we have
\begin{align*}
       T_1
         &=
        |
        \sum_{n=0}^{N-N_1-2}   \ee \int_{t_n}^{t_{n+1}} D u (T-s,\yy^n_n  )
        D b(\yy^n _n)b_\tau ( \yy^n_n)(s-t_n) \dd s \\
        &\qquad +
       \sum_{n=0}^{N-N_1-2}   \ee \int_{t_n}^{t_{n+1}} D u (T-s,\yy^n_n  )
       \RR _b(\yy^n,\yy^n_n) \dd  s |\\
        & \leq
        C_{\star}
        \sum_{n=0}^{N-N_1-2}
        \int_{t_n}^{t_{n+1}}
        e^{-\lambda (T-s-1)}
        (
        1+
        |\yy^n (t_n)|
        _{L_\omega^2}
        )\\
        & \qquad \times
        (
       |
        D b
        (
       \yy^n_n
       ) 
        b_\tau
       (
       \yy^n_n
       )
        \tau
        |_{L_\omega^2}
        +
       |
       \RR _b
      (
       \yy^n,
       \yy^n_n
       )
     |
        _{L_\omega^2}
        )
       \dd  s\\
        & \leq 
        C [
        1+
        \1_{1 < \gamma \le 2}
        |x_0|_{L_\omega^{4\gamma}}^{ 2\gamma}
        +
        \1_{\gamma >2}
        |x_0|_{L_\omega^{6\gamma-4}}^{3\gamma-2}
        ] \tau,
\end{align*}
where we derive from \eqref{Db-v}, \eqref{b-grow},  and Theorem \ref{thm-Y} that
\begin{align*}
        |
        D b
      (
       \yy^n_n
       ) 
        b_\tau
     (
       \yy^n_n
        )
      \tau
        |
        _{L_\omega^2}
          \leq
        C
        (
        1+
        |x_0|^{2\gamma-1}_{L_\omega^{4\gamma-2)}}
        )\tau.
\end{align*}

The proofs for $T_2$ and $T_3$ are similar. Then we get
\begin{align*}
        J^{1}_{212} 
        \leq
         C
        [
        1+
        \1_{1 < \gamma \le 2}
        |x_0|_{L_\omega^{4\gamma}}^{ 2\gamma}
        +
        \1_{\gamma >2}
        |x_0|_{L_\omega^{6\gamma-4}}^{3\gamma-2}
        ] \tau.
\end{align*}
Applying the Taylor expansion to $ D u(t,\cdot)$ gives  
\begin{align*}
        J^{2}_{212}
        & \leq  
         \Big[
         \sum_{n=0}^{N-N_1-2}  
          +
           \sum_{n=N-N_1-1}^{n=N-N_1-1}
         +
          \sum_{n=N-N_1}^{N-1}  
                     \Big]
        \Big|
       \ee
        \int_{t_n}^{t_{n+1}}
        \int_0^1
        D^2 u(
        T-s, v_2(\bar{r})) \\
        &\qquad
     (
       \yy^n
        -
       \yy^n_n,
        b
        (
       \yy^n_n
        )
        -
        b
       (
       \yy^n)
        ) 
       \dd  \bar{r}
       \dd  s \Big|\\
        &=:T_4+T_5+T_6,
\end{align*}
where $v_2(\bar{r}):=\yy^n _n+\bar{r}(\yy^n (s)-\yy^n_n)$.
By \eqref{D2u-est+}, we have
\begin{align*}
        T_4
        & \leq
        C_{\star}
        \sum_{n=0}^{N-N_1-2} 
       \ee
        \int_{t_n}^{ t_{n+1} } 
        e ^ {-\lambda(T-s-1) } 
        [
        1+
        \1_{1 < \gamma \le 2}
        |v_2(\bar{r})|
        +
        \1_{\gamma >2}
        |v_2(\bar{r})|^{\gamma-1}
        ]\\
        & \qquad \times
       |
       \yy^n-\yy^n _n
        |
        \cdot
        |
        b(\yy^n _n)
        -
        b(\yy^n)
        |
       \dd  \bar{r}
       \dd  s.
\end{align*}

To proceed further, we shall consider two cases for   $\gamma$.

(1) In the case $1 < \gamma \leq2$, by H\"older inequality, Lemmas \ref{lm-Du},  \ref{lm-yy},  and Theorem \ref{thm-Y},  we obtain
\begin{align*}
        T_4
        & \leq
        C_{\star}
        \sum_{n=0}^{N-N_1-2} 
       \ee
        \int_{t_n}^{ t_{n+1} } 
        e ^ {-\lambda(T-s-1) }
        (
        1+
        \sup_{r\ge 0}
        |Y_r|_{L_\omega^{3\gamma}}
        )\\
        & \qquad \times
        |
       \yy^n
        -
       \yy^n_n
      |_{L_\omega^{3}}
        |
        b(\yy^n _n)
        -b(\yy^n )
       |
        _{L_\omega^{3\gamma/(2\gamma-1)}} 
       \dd  s\\
        & \leq 
        C (1+
        |x_0|^{3\gamma}
        _{L_\omega^{3\gamma}}
        ) \tau.
\end{align*}

(2) In the case $\gamma >2$,  we have
\begin{align*}
        T_4
        & \leq
        C_{\star}
        \sum_{n=0}^{N-N_1-2} 
       \ee
        \int_{t_n}^{ t_{n+1} } 
        e ^ {-\lambda(T-s-1) }
        (
        1+
        \sup_{r \ge 0}
        |Y_r|^{\gamma-1}_{L_\omega^{4\gamma-2}}
        )\\
        & \qquad \times
       |
       \yy^n
        -
       \yy^n_n
       |_{L_\omega^{(4\gamma-2)/\gamma}}
        |
        b(\yy^n _n)
        -b(\yy^n )
        |_{L_\omega^{(4 \gamma -2)/(2\gamma-1)}} 
       \dd  s\\
        & \leq
        C        (1+
        |x_0|^{4\gamma-2}
        _{L_\omega^{4\gamma-2}}
        ) \tau.  
\end{align*}
Given the above estimations, we get
\begin{align*}
    T_4 \leq
    C [
    1+
    \1_{1 < \gamma \le 2}
    |x_0|_{L_\omega^{3\gamma}}^{3\gamma}
    +
    \1_{\gamma >2}
    |x_0|_{L_\omega^{4\gamma-2}}^{4\gamma-2}
    ] \tau.
\end{align*}
Since the estimates of $T_5$  and $T_6$ follow similar arguments, we omit them for brevity.
%
%
%
%
Combining all the estimates of $T_4$, $T_5$, and $T_6$, we obtain
\begin{align*}
    J^{2}_{212}
    \leq
    C
    [
    1+
    \1_{1 < \gamma \le 2}
    |x_0|_{L_\omega^{3\gamma}}^{3\gamma}
    +
    \1_{\gamma >2}
    |x_0|_{L_\omega^{4\gamma-2}}^{4\gamma-2}
    ] \tau.
\end{align*}
By the estimates of $J_{212}^1$ and $J_{212}^2$, we conclude \eqref{J21-est}.
\end{proof}

\begin{lm}
\label{lm-J22}
Let Assumptions \ref{A1}-\ref{A5} hold. 
For any $x_0 \in L^{l_1 \vee l_3 \vee l_5}(\Omega; \rr^d)$, where $l_1 := \max\{4\alpha_1,\alpha_1+\gamma+2,2\alpha_2,4\gamma+1\},~ l_3 := \max\{2 \gamma+\alpha_1,2\alpha_1+\gamma-1,2\alpha_2,5\gamma\},~ l_5 := \max\{2 \gamma+\alpha_1,2\alpha_1+\gamma-1,2\alpha_2,7\gamma-3\},$
there exists a positive constant $C$ such that
\begin{align}
\label{J22-est}
|J_{22}|
         \leq
        C
        [
        1+
        \1_{1 < \gamma \le 2}
        |x_0|
        _{L_\omega^{l_1}}
        ^{l_2}+
        \1_{\gamma \in(2,3)}
        |x_0|_{L_\omega^{l_3}}^{l_4}
        +\1_{\gamma >3}
        |x_0|_{L_\omega^{l_5}}^{l_6}
        ] \tau,
\end{align}
where $ l_2 := \max\{2\alpha_1+1, (\gamma+2\alpha_1+3)/2,\alpha_2+1,(7\gamma+1)/2\},~ l_4 := \max\{\gamma+2\alpha_1-1,(3 \gamma+2\alpha_1-1)/2,\alpha_2+1,(7\gamma-1)/2,~ l_6 := \max\{\gamma+2\alpha_1-1,(3 \gamma+2\alpha_1-1)/2,\alpha_2+1,4\gamma-2\}.$
\end{lm}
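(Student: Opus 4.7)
The analysis of $J_{22}$ parallels that of Lemma \ref{lm-J21}, with the diffusion coefficient in place of the drift; it is more subtle because the bound on $D^2u$ from Lemma \ref{lm-Du}(1) carries the singularity $(T-s)^{-1/2}$ near the terminal time, and any further Taylor expansion that brings in $D^3u$ only worsens this to $(T-s)^{-1}$. The starting point is the symmetric-bilinearity identity
\begin{align*}
D^2u(T-s,\yy^n)(A,A) - D^2u(T-s,\yy^n)(B,B) = D^2u(T-s,\yy^n)(A-B,\,A+B),
\end{align*}
applied with $A=\sigma_{j,\tau}(\yy^n_n)$ and $B=\sigma_j(\yy^n)$. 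Splitting $A-B = [\sigma_{j,\tau}(\yy^n_n)-\sigma_j(\yy^n_n)] + [\sigma_j(\yy^n_n)-\sigma_j(\yy^n)] =: \Delta_1+\Delta_2$ yields a decomposition $J_{22}=J_{221}+J_{222}$, and both terms are handled by the same three-range time splitting $\{0,\dots,N-N_1-2\}$, $\{N-N_1-1\}$, $\{N-N_1,\dots,N-1\}$ used in Lemma \ref{lm-J21}, so as to exploit the exponential decay of Lemma \ref{lm-Du}(2) far from the terminal time and the bounded integral $\int_0^1 s^{-1/2}\,\dd s<\infty$ near it.

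For $J_{221}$ (modification error), Assumption \ref{A4} gives $|\Delta_1|\le C\tau(1+|\yy^n_n|^{\alpha_1})$, while $|A+B|$ is controlled polynomially in $|\yy^n|+|\yy^n_n|$ via \eqref{s-grow}. Combining these with the $D^2u$ regularity bounds of Lemma \ref{lm-Du} and the uniform moment bounds of Theorem \ref{thm-Y} and Lemma \ref{lm-yy}, H\"older's inequality produces an $O(\tau)$ contribution with the claimed polynomial initial-data weights. For $J_{222}$ (temporal-interpolation error), I would Taylor expand $\Delta_2 = -D\sigma_j(\yy^n_n)(\yy^n-\yy^n_n) - R_{\sigma_j}(\yy^n,\yy^n_n)$; by \eqref{Ds-v} the quadratic remainder satisfies the same type of bound as $R_b$ in Lemma \ref{lm-J21}, and, through Lemma \ref{lm-yy}, contributes $O(\tau)$ directly.

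The principal obstacle is the leading term of $J_{222}$: after substituting $\yy^n-\yy^n_n = b_\tau(\yy^n_n)(s-t_n) + \sum_k \sigma_{k,\tau}(\yy^n_n)(W_s-W_{t_n})_k$, the drift component is immediately $O(\tau)$, but a naive bound on the martingale component combined with the $(T-s)^{-1/2}$ growth of $D^2u$ yields only $O(\sqrt{\tau})$. To recover the sharp rate, I would further Taylor expand both $\sigma_j(\yy^n)$ in the second slot of the bilinear form and $D^2u(T-s,\yy^n)$ around $\yy^n_n$ using the $D^3u$ bound. The principal term then has an $\FFF_{t_n}$-measurable coefficient multiplying a single Brownian increment, whose conditional expectation vanishes; each remaining residual carries either an extra $(s-t_n)$ factor from the drift part of $\yy^n-\yy^n_n$ or, by It\^o isometry, an extra $(s-t_n)$ from a product of two Brownian increments, which offsets the additional singularity brought in by $D^3u$. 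After integration and summation over the three time ranges, each residual yields an $O(\tau)$ contribution. Collecting the bounds and tracking the polynomial growth exponents arising from repeated H\"older applications and the case split $1<\gamma\le 2$, $2<\gamma<3$, $\gamma>3$ in Lemma \ref{lm-Du} produces \eqref{J22-est} with the stated exponents $l_1,\dots,l_6$.
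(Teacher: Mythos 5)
Your decomposition and overall strategy coincide with the paper's: the bilinearity identity for $D^2u(A,A)-D^2u(B,B)$, the split of $A-B$ into the modification error $\Delta_1$ and the interpolation error $\Delta_2$ (giving $J_{221}$ and $J_{222}$), the three-range time splitting, the Taylor expansion of $\sigma_j(\yy^n)$ about $\yy^n_n$ with the leading martingale term killed by conditioning on $\FFF_{t_n}$, and the use of $D^3u$ to control the increment $D^2u(T-s,\yy^n)-D^2u(T-s,\yy^n_n)$. The gap is in your final accounting of that last residual near the terminal time. After the Taylor step it is bounded by a factor of order $(s-t_n)\le\tau$ (from $|\yy^n-\yy^n_n|\,|\sigma_j(\yy^n)-\sigma_j(\yy^n_n)|$ in $L^2$, via Lemma \ref{lm-yy}) times $|D^3u(T-s,\cdot)|\lesssim (T-s)^{-1}$, and summing over the near-terminal range gives
\begin{align*}
\tau\sum_{n=N-N_1}^{N-2}\int_{t_n}^{t_{n+1}}\frac{\dd s}{T-s}\;\le\;\tau\int_\tau^1\frac{\dd s}{s}\;=\;\tau|\ln\tau|,
\end{align*}
which does not collapse to $O(\tau)$: the extra $(s-t_n)$ factor tames the singularity only up to a logarithm. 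This is precisely where the $\tau|\ln\tau|$ in Theorem \ref{thm-weak} and in \eqref{W1} originates. Your claim that ``each residual yields an $O(\tau)$ contribution'' therefore overstates the rate; the paper's own proof of this lemma concludes with a $\tau|\ln\tau|$ bound (the plain $\tau$ in the displayed statement \eqref{J22-est} is inconsistent with both the proof and its downstream use).

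A second, related omission: the very last subinterval $[T-\tau,T]$ cannot be included in the $D^3u$ treatment at all, since $\int_{T-\tau}^{T}(T-s)^{-1}\,\dd s$ diverges. The paper peels off $n=N-1$ and estimates the $D^2u$-increment there directly with the $(T-s)^{-1/2}$ bound of Lemma \ref{lm-Du}, using $\int_{T-\tau}^{T}(T-s)^{-1/2}\,\dd s=2\sqrt{\tau}$. Your third summation range runs up to $N-1$, so as written the singular integral in your argument is infinite on that interval. Both points are repairable within your framework, but they must be addressed, and the achievable rate for $J_{22}$ is $\tau|\ln\tau|$ rather than $\tau$.
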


\begin{proof}
For  $J_{22}$, we have the following decomposition:
\begin{align*}
        J_{22}
        & =
        \frac{1}{2}         \sum_{n=0}^{N-1} 
        \sum_{j=1}^m
       \ee
        \int_{t_n}^{t_{n+1}}
        D^2 u
        (
        T-s,\yy^n
       )
        (
        \sigma_{j,\tau}
       (
       \yy^n_n
       ), 
        \sigma_{j,\tau}
      (
       \yy^n_n
    )
        )  \\
        & 
        \qquad  -
        D^2 u
       (
        T-s,\yy^n
    )
        (
        \sigma_{j}
      (
       \yy^n_n
        ), 
        \sigma_{j}
     (
       \yy^n_n
      )
        )
       \dd  s\\
        & \quad +
        \frac{1}{2}
         \sum_{n=0}^{N-1} 
        \sum_{j=1}^m
       \ee
        \int_{t_n}^{t_{n+1}}
        D^2 u
     (
        T-s,\yy^n
       )
        (
        \sigma_{j}
      (
       \yy^n_n
       ), 
        \sigma_{j}
       (
       \yy^n_n
      )
        )  \\
        &
        \qquad -
        D^2 u
      (
        T-s,\yy^n
      )
        (
        \sigma_{j}
      (
       \yy^n
       ), 
        \sigma_{j}
      (
       \yy^n
      )
        )
       \dd  s \\
        &=:J_{221}+J_{222}.
\end{align*}
Noting that, for any matrix $ U \in \rr^{d \times d}$ and any  $a, b \in \rr^d$,
$$
a^T U a-b^T U b=(a-b)^T U(a-b)+(a-b)^T U b+b^T U(a-b),
$$
one has a further decomposition of $J_{221}$ as follows:
\begin{align*}
        J_{221}
        & =    
        \frac{1}{2} 
         \sum_{n=0}^{N-1} 
        \sum_{j=1}^m
       \ee
        \int_{t_n}^{t_{n+1}}
        D^2 u(
        T-s,\yy^n )
       (
        \sigma_j(\yy^n_n)        - 
        \sigma_{j,\tau}(\yy^n_n),
        \sigma_j(\yy^n_n)
        -
        \sigma_{j,\tau}(\yy^n_n)
      \big )
       \dd s\\
         & \quad +     
         \frac{1}{2}
          \sum_{n=0}^{N-1} 
        \sum_{j=1}^m
        \ee
         \int_{t_n}^{t_{n+1}}
         D^2 
         u(T-s,\yy^n  )
         (
         \sigma_j(\yy^n_n)
         -
         \sigma_{j,\tau}(\yy^n_n),
         \sigma_{j,\tau}(\yy^n _n)
         )
        \dd s\\
         & \quad +             \frac{1}{2}
           \sum_{n=0}^{N-1} 
        \sum_{j=1}^m
        \ee
         \int_{t_n}^{t_{n+1}} 
         D^2 u(T-s,\yy^n)
         (
         \sigma_j(\yy^n _n),
         \sigma_j(\yy^n _n)
         -
         \sigma_{j,\tau}(\yy^n  _n
         )
        \dd s \\
         &=:J_{221}^1+J_{221}^2+J_{221}^3.
\end{align*}
Similar to what was done before, we have
\begin{align*}
|J_{221}^1|&\le
         \frac{1}{2}
         [
         \sum_{n=0}^{N-N_1-2}  
          +
           \sum_{n=N-N_1-1}^{n=N-N_1-1}
         +
          \sum_{n=N-N_1}^{N-1}  
                     ]
         \sum_{j=1}^m 
        \ee
         \int_{t_n}^{t_{n+1}}
         |
         D^2 u
         (
         T-s,\yy^n 
       ) \\
       &\qquad
         (
         \sigma_j(\yy^n_n)         - 
         \sigma_{j,\tau}(\yy^n_n),
         \sigma_j(\yy^n_n)
         -
         \sigma_{j,\tau}(\yy^n_n)
         )
         |
        \dd s\\
        &=:R_1+R_2+R_3.
\end{align*}

For  $R_1$, by Lemma \ref{lm-Du},  Theorem \ref{Y-est}, and  Assumption \ref{A4}, we infer
\begin{align*}
        R_1
        & \leq
        C_{\star}
        \sum_{n=0}^{N-N_1-2}  
        \sum_{j=1}^m 
       \ee
        \int_{t_n}^{t_{n+1}}
        e^{-\lambda(T-s-1)}
        [
        1+
        \1_{1 < \gamma \le 2}(\yy^n  )|\\
                & \qquad +
        \1_{\gamma >2}
        |\yy^n  |^{\gamma-1}
        ]
       |
        \sigma_j(\yy^n  _n)
        - 
        \sigma_{j,\tau}(\yy^n _n)
        |^2
       \dd  s\\
        & \leq 
        C_{\star}
        \sum_{n=0}^{N-N_1-2}  
        \sum_{j=1}^m 
        \int_{t_n}^{t_{n+1}}
        e^{-\lambda(T-s-1)}
        [
        1+
        \1_{1 < \gamma \le 2}
        |\yy^n |
        _{L_\omega^2}
        ]\\
        & \qquad \qquad \times
       |
        \sigma_j(\yy^n _n)
        - 
        \sigma_{j,\tau}(\yy^n _n)
      |^{2}_{L_\omega^{4}}
       \dd s\\
        & \quad +
        C_{\star}
        \sum_{n=0}^{N-N_1-2}  
        \sum_{j=1}^m 
        \int_{t_n}^{t_{n+1}}
        e^{-\lambda(T-s-1)}
        [
        1+
        \1_{\gamma \in(2,\infty)}
        |\yy^n |^{\gamma-1}
        _{L_\omega^{\gamma+2\alpha_1-1}}
        ]\\
        & \qquad  \times
        |
        \sigma_j(\yy^n  _n)
        - 
        \sigma_{j,\tau}(\yy^n _n)
       |^{2}
        _{L_\omega^{{(\gamma+2\alpha_1-1)/\alpha_1}}}
       \dd s \\
        & \leq
        C_{\star}
        [
        1+
        \1_{1 < \gamma \le 2}
        |x_0|^{2\alpha_1+1}
        _{L_\omega^{4\alpha_1}}
        +
        \1_{\gamma >2}
        |x_0|^{2\alpha_1+\gamma-1}_{L_\omega^{2\alpha_1+\gamma-1}}
        ]\tau^2.
\end{align*}
The estimate for $R_2$ is similar to $J_{211}^2$, and we omit it here.
For $R_3$, one deduces
\begin{align*}
        R_3
        & \leq
        C
         \sum_{n=N-N_1}^{N-1}  
         \sum_{j=1}^m 
        \ee
         \int_{t_n}^{t_{n+1}}
         \frac{1}{\sqrt{T-s}}
         [
        1+
        \1_{1 < \gamma \le 2}|\yy^n |\\
        & \qquad +
        \1_{\gamma >2}
        |\yy^n  |^{\gamma-1}
        ]
       |
        \sigma_j(\yy^n  _n)
        - 
        \sigma_{j,\tau}(\yy^n  _n)
       |^2
       \dd  s\\
         & \leq
         C
        [
        1+
        \1_{1 < \gamma \le 2}
        |x_0|^{2\alpha_1+1}
        _{L_\omega^{4\alpha_1}}
        +
        \1_{\gamma >2}
        |x_0|^{2\alpha_1+\gamma-1}_{L_\omega^{2\alpha_1+\gamma-1}}
        ] \tau^2,
\end{align*}
where
\begin{align*}
    \sum_{n=N-N_1}^{N-1}
    \int_{t_n}^{t_{n+1}}
    \frac{1}{\sqrt{T-s}}
   \dd  s
    \leq 
    \int_0^1 \frac{1}{\sqrt{t}} 
   \dd  t
    = 2.
\end{align*}
%
%
%
%
%
Thus, we have
\begin{align*}
    |J_{221}^1|
    \leq
    C
    [
    1+
    \1_{1 < \gamma \le 2}
    |x_0|^{2\alpha_1+1}
    _{L_\omega^{4\alpha_1}}
    +
    \1_{\gamma >2}
    |x_0|^{2\alpha_1+\gamma-1}
    _{L_\omega^{2\alpha_1+\gamma-1}}
    ]\tau^2.
\end{align*}
The terms $J_{221}^2$ and $J_{221}^3$ can be treated similarly.
Therefore, we get
\begin{align*}
        |J_{221}^2|
        & \leq  
         \frac{1}{2} 
          [
         \sum_{n=0}^{N-N_1-2}  
          +
           \sum_{n=N-N_1-1}^{n=N-N_1-1}
         +
          \sum_{n=N-N_1}^{N-1}  
                     ]
         \sum_{j=1}^m 
        \ee
         \int_{t_n}^{t_{n+1}}
         |
         D^2 
         u(T-s,\yy^n  ) \\
         &\qquad
         (
         \sigma_j(\yy^n  _n)-
         \sigma_{j,\tau}(\yy^n _n),
         \sigma_{j,\tau}(\yy^n_n)
         )
         |
        \dd s\\
        &=:R_4+R_5+R_6.
\end{align*}
From   Lemma \ref{lm-Du}, we see
\begin{align*}
        R_4
        & \leq
        C_{\star}
        \sum_{n=0}^{N-N_1-2}  
        \sum_{j=1}^m 
       \ee
        \int_{t_n}^{t_{n+1}}
        e^{-\lambda(T-s-1)}
        [
        1+
        \1_{1 < \gamma \le 2}|\yy^n |+
        \1_{\gamma >2} |\yy^n  |^{\gamma-1}
        ]\nonumber\\
        & \qquad 
        \times
      |
        \sigma_j(\yy^n_n)
        - 
        \sigma_{j,\tau}(\yy^n_n)
       |
        |\sigma_{j,\tau}(\yy^n_n)|
       \dd s.
\end{align*}
In the following, we proceed to the estimate of $R_4$, which will be divided into two cases depending on the ranges of $\gamma$.

(1) In the case $1 < \gamma \leq2$,  using Lemma \ref{lm-yy}, Theorem \ref{thm-Y},   and H\"older inequality, we  obtain
\begin{align*}
    R_4
    & \leq
    C_{\star}
    \sum_{n=0}^{N-N_1-2}  
    \sum_{j=1}^m 
   \ee
    \int_{t_n}^{t_{n+1}}
    e^{-\lambda(T-s-1)}
    (
    1+
    \sup_{r \ge 0}
    |Y_r|
    _{L_\omega^{\alpha_1+\gamma+2}}
    )\\
    & \qquad \times
  |
    \sigma_j(\yy^n_n)
    - 
    \sigma_{j,\tau}(\yy^n _n)
 |
    _{L_\omega^{(\alpha_1+\gamma+2)/\alpha_1}
   }
   |
    \sigma_{j,\tau}(\yy^n_n)
   |
    _{L_\omega^{(\alpha_1+\gamma+2)/(\gamma+1)}} 
   \dd  s\\
    & \leq 
    C (1+
    |x_0|
    ^{(2\alpha_1+\gamma+3)/2}
    _{L_\omega^{\alpha_1+\gamma+2}}
    ) \tau.
\end{align*}

(2) In the case $\gamma >2$,  we derive
\begin{align*}
    R_4
    & \leq
    C_{\star}
    \sum_{n=0}^{N-N_1-2}  
    \sum_{j=1}^m 
   \ee
    \int_{t_n}^{t_{n+1}}
    e^{-\lambda(T-s-1)}
    (
    1+
    \sup_{r \ge 0}
    |Y_r|^{\gamma-1}
    _{L_\omega^{\alpha_1+2\gamma}}
    )\\
    & \qquad \times
  |
    \sigma_j(\yy^n_n)
    - 
    \sigma_{j,\tau}(\yy^n_n)
    |
    _{L_\omega^{(\alpha_1+2\gamma)/\alpha_1}
   }
 |
    \sigma_{j,\tau}(\yy^n_n)
   |
    _{L_\omega^{(\alpha_1+2\gamma)/(\gamma+1)}} 
   \dd  s\\
    & \leq 
    C (1+
    |x_0|
    ^{(2\alpha_1+3\gamma-1)/2}
    _{L_\omega^{\alpha_1+2\gamma}}
    ) \tau.
\end{align*}
As a consequence, in any case, we have
\begin{align*}
    R_4
    & \leq
    C \Big[
    1+
    \1_{\gamma\in(1,2]}
    |x_0|
    _{L_\omega^{\alpha_1+\gamma+2}}
    ^{ (2\alpha_1+\gamma+3)/2}
    +
    \1_{\gamma >2}
    |x_0|_{L_\omega^{\alpha_1+2 \gamma}}^{(2\alpha_1+3 \gamma-1)/2}
    \Big]  
    \tau.
\end{align*}
%
%
Due to similar techniques and length limitations, we omit the estimation details for $R_5$ and $R_6$.
Armed with the estimate of $R_4$,  we obtain
\begin{align*}
    |J_{221}^2|
    \leq 
    C
    \Big[
    1+
    \1_{\gamma\in(1,2]}
    |x_0|
    _{L_\omega^{\alpha_1+\gamma+2}}
    ^{ (2\alpha_1+\gamma+3)/2}
    +
    \1_{\gamma >2}
    |x_0|_{L_\omega^{\alpha_1+2 \gamma}}^{(2\alpha_1+3 \gamma-1)/2}
    \Big]  
    \tau.
\end{align*}
Then we obtain
\begin{align*}
|J_{221}|
         \leq
        C
        \Big [
        1+
        \1_{1 < \gamma \le 2}
        |x_0|
        _{L_\omega^{l_1}}
        ^{l_2}+
        \1_{\gamma >2}
        |x_0|_{L_\omega^{l_3}}^{l_4}
        \Big ] 
        \tau.
\end{align*}
For $J_{222}$, we make a decomposition as follows:
\begin{align*}
        J_{222}
        & =
         \frac{1}{2} 
         \sum_{n=0}^{N-1}  \sum_{j=1}^m 
        \ee
         \int_{t_n}^{t_{n+1}}
         D^2 u
    (
         T-s,\yy^n 
    )
         (
         \sigma_j(\yy^n )
         - 
         \sigma_j(\yy^n _n),
         \sigma_j(\yy^n )
         -
         \sigma_j(\yy^n _n)
         )
        \dd s \\
         & \quad + 
         \frac{1}{2} 
         \sum_{n=0}^{N-1}  \sum_{j=1}^m 
        \ee 
         \int_{t_n}^{t_{n+1}}
         D^2 
         u(T-s,\yy^n )
         (\sigma_j(\yy^n )
         -
         \sigma_j(\yy^n _n),
         \sigma_j(\yy^n _n)
         )
        \dd s \\
         & \quad +  
         \frac{1}{2} 
         \sum_{n=0}^{N-1}  \sum_{j=1}^m 
        \ee
         \int_{t_n}^{t_{n+1}} 
         D^2 u(T-s,\yy^n )
         (
         \sigma_j(\yy^n _n),
         \sigma_j(\yy^n )
         -
         \sigma_j(\yy^n _n)
         )
        \dd s \\
         &=:J^{1}_{222}+J^{2}_{222}+J^{3}_{222}.
\end{align*}
Using Lemmas  \ref{lm-Du},  \ref{lm-yy},   Theorem \ref{thm-Y},   and  H\"older inequality shows
\begin{align*}
   | J^{1}_{222} |
    \leq
    C
    [
    1+
    \1_{1 < \gamma \le 2}
    |x_0|_{L_\omega^{3\gamma}}^{3\gamma}
    +
    \1_{\gamma >2}
    |x_0|_{L_\omega^{4\gamma-2}}^{4\gamma-2}
    ] \tau.
\end{align*}
%
%
%
%
For  $J^{2}_{222}$, we first use the triangle inequality to get
\begin{align*}
        J^{2}_{222}
        & = \frac{1}{2} 
         \sum_{n=0}^{N-1}  \sum_{j=1}^m 
        \ee
         \int_{t_n}^{t_{n+1}}
         D^2 
         u(T-s,\yy^n _n)
         (\sigma_j(\yy^n )-\sigma_j(\yy^n _n),
         \sigma_j(\yy^n _n)
         )
        \dd s 
         \\
         & \quad +
        \frac{1}{2} 
         \sum_{n=0}^{N-1}  \sum_{j=1}^m 
        \ee
         \int_{t_n}^{t_{n+1}}
         (D^2 
         u(T-s,\yy^n )
         -
         D^2 
         u(T-s,\yy^n _n)
         )\\
         & \qquad \times
        (\sigma_j(\yy^n )
         -\sigma_j(\yy^n _n),
         \sigma_j(\yy^n _n)
         )
        \dd s \\
         &=:R_7+R_8.
\end{align*}
By the Taylor expansion, we have
\begin{align*}
        \sigma_j
      (\yy^n (s))
        & = 
        \sigma_j
        (
       \yy^n_n
       )
        +
        D \sigma_j
     (
       \yy^n_n)
     (\yy^n (s)-\yy^n _n)
        +
       \RR _{\sigma_j}
       (
       \yy^n(s),
       \yy^n_n) \\
        & = 
        \sigma_j
        (
       \yy^n_n
        )
        +
        D \sigma_j
        (
        Y_n)
        (
        b_\tau(\yy^n _n)(s-t_n)\\
        & \quad +
        \sigma_{j,\tau}
        (
       \yy^n_n
        )
        (W_s-W_n)
        )
        +
       \RR _{\sigma_j}
        (
       \yy^n(s),
       \yy^n_n
        ),
\end{align*}
where we denote
\begin{align*}
       \RR _{\sigma_j}
        (
       \yy^n(s),
       \yy^n_n
        ) :=
        \int_0^1
        (
        D \sigma_j
        (
       \yy^n_n)
        +
        r
        (
       \yy^n(s)
        -\yy^n _n
        )
        -
        D \sigma_j
        (
       \yy^n_n
        )
        )
        (
       \yy^n(s)
        -
       \yy^n_n
        ) 
       \dd r.
\end{align*}
Similarly, we treat $R_7$ as follows:
\begin{align*}
        |R_7|
        & \leq    
        \frac{1}{2} 
        \Big [
         \sum_{n=0}^{N-N_1-2}  
          +
           \sum_{n=N-N_1-1}^{n=N-N_1-1}
         +
          \sum_{n=N-N_1}^{N-1}  
                     \Big ]
  \sum_{j=1}^m 
        \Big|
       \ee
        \int_{t_n}^{t_{n+1}}
        D^2 
        u(T-s,\yy^n _n) \\
        &\qquad
        (\sigma_j(\yy^n )-
        \sigma_j(\yy^n _n),
        \sigma_j(\yy^n _n))
       \dd s \Big|\\
       & =:R_{71}+R_{72}+R_{73}.
\end{align*}
Next, we estimate $R_{71}$, as the other two terms can be estimated similarly. 
In view of Lemma \ref{lm-Du}, we obtain
\begin{align*}
        R_{71}
        & \leq
        C_{\star}
        \sum_{n=0}^{N-N_1-2} 
        \sum_{j=1}^m 
       \ee
        \int_{t_n}^{ t_{n+1} } 
        e ^ {-\lambda(T-s-1) } 
        [
        1+
        \1_{1 < \gamma \le 2}
        |\yy^n _n|
        +
        \1_{\gamma >2}
        |\yy^n _n|
        ^{\gamma-1}
        ]\\
        & \qquad \times
        |
        D \sigma_j(\yy^n _n)
        b_\tau
        (
       \yy^n_n
       )
        (s-t_n)
        +
       \RR _{\sigma_j}
      (
       \yy^n,
       \yy^n_n
        )
        |
        \cdot
        |
        \sigma_j(\yy^n _n)
     |
       \dd  s\\
        & \leq
        C_{\star}
        \sum_{n=0}^{N-N_1-2} 
        \sum_{j=1}^m 
       \ee
        \int_{t_n}^{ t_{n+1} } 
        e ^ {-\lambda(T-s-1) } 
        [
        1+
        \1_{1 < \gamma \le 2}
        |\yy^n _n|
        +
        \1_{\gamma >2}
        |\yy^n _n|
        ^{\gamma-1}
        ]\\
        & \qquad \times
        |
        D \sigma_j(\yy^n _n)
        b_\tau
     (
       \yy^n_n
      )
        (s-t_n)
    |
        \cdot
        |
        \sigma_j
        (\yy^n _n)
       |
       \dd  s\\
        & \quad +
        C_{\star}
        \sum_{n=0}^{N-N_1-2} 
        \sum_{j=1}^m 
       \ee
        \int_{t_n}^{ t_{n+1} } 
        e ^ {-\lambda(T-s-1) } 
        [
        1+
        \1_{1 < \gamma \le 2}
        |\yy^n _n|
        +
        \1_{\gamma >2}
        |\yy^n _n|
        ^{\gamma-1}
        ]\\
        & \qquad \times
        |
       \RR _{\sigma_j}
       (
       \yy^n,
       \yy^n_n
      )
      |
        \cdot
      |
        \sigma_j
        (\yy^n _n)
        |
       \dd  s\\
        &=:R_{711}+R_{712},
\end{align*}
where the  H\"older inequality ensures
\begin{align*}
    R_{711}
    \leq
    C
    \big[
    1+
    \1_{1 < \gamma \le 2}
    |x_0|_{L_\omega^{4\gamma+1}}^{2\gamma+1}
    +
    \1_{\gamma >2}
    |x_0|_{L_\omega^{5\gamma-1}}^{3\gamma-1}
    \big] 
    \tau.
\end{align*}
Next, we estimate $R_{712}$ for three cases depending on the ranges of $\gamma$.

(1) In the case $1 < \gamma \leq2$, 
using \eqref{Ds-v}, H\"older inequality,  and Theorem \ref{thm-Y} gives
\begin{align*}
    R_{712}
    & \leq
    C_{\star}
        \sum_{n=0}^{N-N_1-2} 
        \sum_{j=1}^m 
        \int_{t_n}^{ t_{n+1}} 
        e^{-\lambda(T-s-1)} 
        (
        1+
        \sup_{0\leq r \leq N}
        |Y_r|_{L_\omega^{3\gamma+2}}
        )\\
        & \qquad \times
       |
       \RR _{\sigma_j}
        (\yy^n,
       \yy^n_n)
       |
        _{L_\omega^{(3\gamma+2)/2\gamma}}
      |
        \sigma_j(\yy^n _n)
        |
        _{L_\omega^{(3\gamma+2)/(\gamma+1)}}
       \dd  s \\
        & \leq
        C \big (1+
        |x_0|
        ^{(5\gamma+3)/2}
        _{L_\omega^{3\gamma+2}}
        \big) \tau.   
\end{align*}

(2) In the case $2<\gamma\leq 3$,
 we deduce
\begin{align*}
    R_{712}
    & \leq
    C_{\star}
    \sum_{n=0}^{N-N_1-2} 
    \sum_{j=1}^m 
    \int_{t_n}^{ t_{n+1}} 
    e^{-\lambda(T-s-1)} 
    (
    1+
    \sup_{0\leq r \leq N}
    |Y_r|^{\gamma-1}
    _{L_\omega^{4\gamma}}
    )\\
    & \qquad \times
    |
   \RR _{\sigma_j}
    (\yy^n,
   \yy^n_n)
   |
    _{L_\omega^{2}}
    |
    \sigma_j(\yy^n _n)
   |
    _{L_\omega^{4\gamma/(\gamma+1)}
   }
   \dd  s \\
    & \leq
    C (1+
    |x_0|
    ^{(7\gamma-1)/2}
    _{L_\omega^{4\gamma}}
    ) \tau.   
\end{align*}

(3) In the case $\gamma>3$,  we get
\begin{align*}
    R_{712}
    & \leq
    C_{\star}
    \sum_{n=0}^{N-N_1-2} 
    \sum_{j=1}^m 
    \int_{t_n}^{ t_{n+1}} 
    e^{-\lambda(T-s-1)} 
    (
    1+
    \sup_{0\leq r \leq N}
    |Y_r|^{\gamma-1}
    _{L_\omega^{7\gamma-3}}
    )\\
    & \qquad \times
    |
   \RR _{\sigma_j}
    (\yy^n,
   \yy^n_n)
    |
    _{L_\omega^{(7\gamma-3)/(5\gamma-3)}}
    |
    \sigma_j(\yy^n_n)
 |
    _{L_\omega^{4\gamma/(\gamma+1)}
    }
   \dd  s \\
    & \leq
    C  (1+
    |x_0|
    ^{4\gamma-2}
    _{L_\omega^{7\gamma-3}}
    ) \tau.  
\end{align*}
%
%
%
%
%
%
The proofs for $R_{72}$ and $R_{73}$ follow similarly and are therefore omitted.
Then
\begin{align*}
    |R_7 |
    & \leq
    C
    \Big [
    1+
    \1_{1 < \gamma \le 2}
    |x_0|_{L_\omega^{4\gamma+1}}^{(5\gamma+3)/2}
    +
    \1_{\gamma \in(2,3]}
    |x_0|_{L_\omega^{5\gamma-1}
    }
    ^{(7\gamma-1)/2}
   +
    \1_{\gamma \in(3,\infty)}
    |x_0|_{L_\omega^{7\gamma-3}}^{ 4\gamma-2}
    \Big] 
    \tau.    
\end{align*}
To overcome the possible singularities, we  decompose $|R_8 |$ as follows:
\begin{align*}
        |R_8 |
        & \leq
        \frac{1}{2} 
        \Big [
         \sum_{n=0}^{N-N_1-2}  
          +
           \sum_{n=N-N_1-1}^{n=N-N_1-1}
         +
          \sum_{n=N-N_1}^{N-2}  
               \Big ]
        \sum_{j=1}^m 
       \ee
        \int_{t_n}^{t_{n+1}}
        \int_0^1 
        |
        D^3 u
      (T-s, v_3(\bar{r}))
       \\
        & \qquad
        (
        \sigma_j(\yy^n)
        -
        \sigma_j(\yy^n_n),
        \sigma_j(\yy^n )
        -
        \sigma_j(\yy^n_n),
        \sigma_j(\yy^n_n)
        )
        |
       \dd  \bar{r}
       \dd s\\
       &\quad +
        \frac{1}{2}
        \sum_{j=1}^m 
       \ee
        \int_{T-\tau}^{T}
        |
      (  D^2 
        u(T-s,\yy^{N-1})-
         D^2 
        u(T-s,\yy^{N-1}_{N-1})) \\
        &\qquad
        (
        \sigma_j(\yy^{N-1}))-
        \sigma_j(\yy^{N-1} _{N-1}),
        \sigma_j(\yy^{N-1} _{N-1})
        )
        |
       \dd s\\
       &
       =:R_{81}+R_{82}+R_{83}+R_{84},
\end{align*}
where $v_3(\bar{r})
:=
\yy^n _n
+
\bar{r}(\yy^n (s)-\yy^n_n)$. 
Thanks to Lemmas \ref{lm-Du},  \ref{lm-yy}, and the condition \eqref{s-grow}, we derive
\begin{align*}
        R_{81}
        & \leq
        C_{\star}
        \sum_{n=0}^{N-N_1-2} 
        \sum_{j=1}^m 
       \ee
        \int_{t_n}^{t_{n+1}}
        \int_0^1
        e^{-\lambda(T-s-1)}
        [
        1+
        \1_{1 < \gamma \le 2}
        |v_3(\bar{r})|
        \\
        & \qquad +
        \1_{\gamma >2}
        |v_3(\bar{r})|
        ^{\gamma-1})
        ]
    |
        \sigma_j(\yy^n)
        - \sigma_j(\yy^n_n)
        |^2
     |
        \sigma_j(\yy^n _n)
        |
       \dd  \bar{r}
       \dd  s\\
        & \leq 
        C [
       1+
    \1_{1 < \gamma \le 2}
    |x_0|_{L_\omega^{4\gamma+1}}^{(7\gamma+1)/2}
    +
    \1_{\gamma >2}
    |x_0|_{L_\omega^{5\gamma-1}}^{(9\gamma-3)/2}
    ] \tau.
\end{align*}
In a similar way,   $R_{82}$ can be estimated as follows: 
\begin{align*}
    R_{82} 
    \leq
    C
    \Big [
    1+
    \1_{1 < \gamma \le 2}
    |x_0|_{L_\omega^{4\gamma+1}}
    ^{(7\gamma+1)/2}
    +
    \1_{\gamma >2}
    |x_0|_{L_\omega^{5\gamma-1}}^{(9\gamma-3)/2}
    \Big ]
    \tau(1+ |\ln \tau|).
\end{align*}
Likewise, we obtain
\begin{align*}
    R_{83}
    & \leq
    C
    \sum_{n=N-N_1}^{N-2}  
    \sum_{j=1}^m 
   \ee
    \int_{t_n}^{t_{n+1}}
    \int_0^1
    \frac{1}{T-s}
    [
    1+
    \1_{1 < \gamma \le 2}
    |v_3(\bar{r})|
  \\
    & \qquad   +
    \1_{\gamma >2}    |v_3(\bar{r})|
    ^{\gamma-1})
    ]
|
    \sigma_j(\yy^n)
    - 
    \sigma_j(\yy^n _n)
    |^2
   |
    \sigma_j(\yy^n _n)
   |
   \dd  \bar{r}
   \dd  s\\
    & \leq
    C
    [
    1+
    \1_{1 < \gamma \le 2}
    |x_0|_{L_\omega^{4\gamma+1}}
    ^{(7\gamma+1)/2}
    +
    \1_{\gamma >2}
    |x_0|_{L_\omega^{5\gamma-1}}^{(9\gamma-3)/2}
    ]
    \tau | \ln \tau|,
\end{align*}
where we used the fact that
\begin{align*}
    \sum_{n=N-N_1}^{N-2} \int_{t_n}^{t_{n+1}}
    \frac{1}{T-s} 
   \dd  s 
    \leq 
    \int_\tau^1 
    \frac{1}{s} 
   \dd  s
    =
    |\ln \tau| .
\end{align*}
We are now in a position to handle the term $R_{84}$ as follows:
\begin{align*}
       R_{84}
        & \leq  
        \frac{1}{2}
        \sum_{j=1}^m 
       \ee
        \int_{T-\tau}^{T}
        |
        D^2 
        u(T-s,\yy^{N-1})
        (
        \sigma_j(\yy^{N-1}))-
        \sigma_j(\yy^{N-1} _{N-1}),
        \sigma_j(\yy^{N-1} _{N-1})
        )
        |
       \dd s\\
        & \quad +
        \frac{1}{2} 
        \sum_{j=1}^m 
       \ee
        \int_{T-\tau}^{T}
        |
        D^2 
        u(T-s,\yy^{N-1}_{N-1})
        (\sigma_j(\yy^{N-1})
        -
        \sigma_j(\yy^{N-1}_{N-1}),
        \sigma_j(\yy^{N-1}_{N-1})
        )
        |
       \dd s \\
        &=:R_{841}+R_{842}.
\end{align*}
For $R_{841}$, one can easily show
\begin{align*}
        R_{841}
        & \leq
        C 
        \sum_{j=1}^m 
       \ee
        \int_{T-\tau}^T 
        \frac{1}
        {\sqrt{T-s}}
        [
        1+
        \1_{1 < \gamma \le 2}
        |\yy^{N-1}|
        +
        \1_{\gamma >2}
        |\yy^{N-1}|^{\gamma-1}
        ]\\
        & \qquad \cdot
     |
        \sigma_j(\yy^{N-1}_{N-1})
        -
        \sigma_j(\yy^{N-1})
    |
        \cdot
       |
        \sigma_j(\yy^{N-1}_{N-1})
        |
       \dd  s\\
        & \leq 
        C
        [
        1+
        \1_{1 < \gamma \le 2}
        |x_0|
        _{L_\omega^{4\gamma+1}}
        ^{2\gamma+1}
        +
        \1_{\gamma >2}
        |x_0|_{L_\omega^{5\gamma-1}}
        ^{3\gamma-1}
        ]\tau,
\end{align*}
where the fact was used that
\begin{align*}
    \int_{T-\tau}^T 
        \frac{1}
        {\sqrt{T-s}} 
       \dd  s 
        =
        \int_{0}^\tau 
        \frac{1}
        {\sqrt{t}} 
       \dd  t 
        =
        2\sqrt{\tau}.
\end{align*}
%
%
The estimate of $R_{842}$ follows similarly, and we omit the details here.
As a consequence of the above estimates, we obtain
\begin{align*}
    |R_8|
    & \leq
    C
    [
    1+
    \1_{1 < \gamma \le 2}
    |x_0|_{L_\omega^{4\gamma+1}}
    ^{(7\gamma+1)/2}
    +
    \1_{\gamma >2}
    |x_0|_{L_\omega^{5\gamma-1}}^{(9\gamma-3)/2}
    ]
    \tau|\ln \tau|.
\end{align*}
Then we have
\begin{align*}
    |J_{222}|
    & \leq
    C
    [
    1+
    \1_{1 < \gamma \le 2}
    |x_0|_{L_\omega^{4 \gamma+1}}^{(7\gamma+1)/2}
    +
    \1_{2 < \gamma \le 3}
    |x_0|_{L_\omega^{5 \gamma}}^{(7\gamma-1)/2} +
    \1_{\gamma \in(3,\infty)}
    |x_0|_{L_\omega^{7\gamma-3}}^{4\gamma-2}
    ]
    \tau |\ln \tau|,
\end{align*}
and the desired assertion \eqref{J22-est} follows, by taking these estimates into account.
\end{proof}

We are ready to provide the uniform weak error estimate for \eqref{MEM}.

\begin{thm}
\label{thm-weak}
    Let Assumptions \ref{A1}--\ref{A5} hold. 
For any $\phi \in \mathrm{\rm Lip}(\rr^d)$  and $x_0 \in L^{l_1 \vee l_3 \vee l_5}(\Omega; \rr^d)$, there exists a constant $C$ such that for any $\tau \in (0, 1)$ and $N \in \nn$,
    \begin{align}
    \label{weak}
       &  |
       \ee \phi(Y_N^{x_0})-
       \ee \phi(X_{N\tau}^{x_0})| \nonumber\\
        & \leq 
        C 
      \Big [
        1+
        \1_{1 < \gamma \le 2}
        |x_0|
        _{L_\omega^{l_1}}
        ^{l_2}
        +
        \1_{2 < \gamma \le 3}
        |x_0|_{L_\omega^{l_3}}^{l_4} +
        \1_{\gamma >3}
        |x_0|
        _{L_\omega^{l_5}}^{l_6} \Big]
        \tau|\ln \tau|.
    \end{align}
\end{thm}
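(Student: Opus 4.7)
The plan is to derive the weak error by a standard It\^o--Kolmogorov decomposition and then invoke the three lemmas already established. Write $T = N\tau$ and recall that $u(t,x) = \ee\phi(X_t^x)$ solves \eqref{Komgv} and enjoys the regularity of Lemma \ref{lm-Du}. Since $u(T, x_0) = \ee\phi(X_T^{x_0})$ and $u(0, Y_N) = \phi(Y_N^{x_0})$, we have the telescoping representation
\begin{align*}
\ee\phi(Y_N^{x_0}) - \ee\phi(X_T^{x_0}) = \sum_{n=0}^{N-1} \bigl\{ \ee u(T-t_{n+1}, Y_{n+1}) - \ee u(T-t_n, Y_n) \bigr\}.
\end{align*}
Since $Y_{n+1} = \yy^n(t_{n+1})$ by construction of the continuous interpolation in \eqref{yy} while $\yy^n(t_n) = \PPP(Y_n)$, insert and subtract $\ee u(T-t_n, \yy^n(t_n))$ to split each summand into a ``jump'' part (summed over $n$, this gives exactly the term $J_1$ controlled in Lemma \ref{lm-J1}, accounting for the projection discontinuity $Y_n \mapsto \PPP(Y_n)$) and a ``drift/diffusion'' part to be handled by It\^o's formula.

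For the drift/diffusion part, apply It\^o's formula to $t \mapsto u(T-t, \yy^n(t))$ on $[t_n, t_{n+1}]$:
\begin{align*}
& u(T-t_{n+1}, \yy^n(t_{n+1})) - u(T-t_n, \yy^n(t_n)) \\
& \quad = \int_{t_n}^{t_{n+1}} \Bigl[ -\partial_s u(T-s, \yy^n) + Du(T-s, \yy^n) b_\tau(\yy^n_n) \\
& \qquad\quad + \tfrac{1}{2}\sum_{j=1}^m D^2 u(T-s, \yy^n)(\sigma_{j,\tau}(\yy^n_n), \sigma_{j,\tau}(\yy^n_n)) \Bigr] ds + M_n,
\end{align*}
where $M_n$ is a stochastic integral whose expectation vanishes. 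Square-integrability of the integrand (needed to kill $\ee M_n$) follows from the moment bounds in Theorem \ref{thm-Y}, the estimates on $Du$ in Lemma \ref{lm-Du}, and the polynomial growth of $b, \sigma$. Substituting the Kolmogorov equation \eqref{Komgv} evaluated at $\yy^n$ in place of $\partial_s u(T-s, \yy^n)$ converts the drift and second-order contributions into the differences $b_\tau(\yy^n_n) - b(\yy^n)$ and $\sigma_{j,\tau}(\yy^n_n)^{\otimes 2} - \sigma_j(\yy^n)^{\otimes 2}$, yielding the clean identity
\begin{align*}
\ee\phi(Y_N^{x_0}) - \ee\phi(X_T^{x_0}) = J_1 + J_{21} + J_{22}.
\end{align*}

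The final step is pure assembly. Lemma \ref{lm-J1} contributes an $O(\tau)$ bound, Lemma \ref{lm-J21} contributes $O(\tau)$, and Lemma \ref{lm-J22} contributes the dominant $O(\tau|\ln\tau|)$ term; combining with the moment exponents $l_1, l_3, l_5$ gives \eqref{weak} directly. The logarithmic factor originates in $J_{22}$: expanding $\sigma_{j,\tau}(\yy^n_n)^{\otimes 2} - \sigma_j(\yy^n)^{\otimes 2}$ into products of a difference $\sigma_j(\yy^n) - \sigma_j(\yy^n_n)$ and a bounded factor, and using the short-time bound $|D^2 u(T-s,\cdot)| \lesssim (T-s)^{-1/2}$ from \eqref{D2u-est} combined with $|D^3 u|\lesssim (T-s)^{-1}$ from \eqref{D3u-est} near $s = T$, produces $\int_\tau^1 s^{-1} ds = |\ln \tau|$. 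The main conceptual obstacle, namely handling merely Lipschitz (not smooth) test functions, was already resolved upstream by invoking the Bismut--Elworthy--Li formula (Lemma \ref{lm-BEL}) to transfer differentiability from $\phi$ onto the Markov semigroup; the remaining work is just the careful orchestration of the three preceding lemmas, together with the splitting of the time sum into regions $[0, T-1)$, $[T-1, T-\tau)$, and $[T-\tau, T)$ that appears throughout the proofs of Lemmas \ref{lm-J1}--\ref{lm-J22} to control both the exponential decay from Assumption \ref{A3} and the short-time singularities in $D^k u$.
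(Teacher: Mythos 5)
Your proposal is correct and follows essentially the same route as the paper: the telescoping sum, the insertion of $\ee u(T-t_n,\yy^n_n)$ to isolate the projection-jump term $J_1$, the It\^o/Kolmogorov substitution producing $J_{21}+J_{22}$, and the assembly from Lemmas \ref{lm-J1}--\ref{lm-J22}. Your identification of the source of the $|\ln\tau|$ factor (the $D^3u$ and singular $D^2u$ terms near $s=T$ inside $J_{22}$, integrated via $\int_\tau^1 s^{-1}\,\dd s$) also matches the paper's argument.
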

\begin{proof}
Owing to the telescoping argument, \eqref{yy} and \eqref{def-u}, the weak error can be decomposed as follows with $T=N\tau$:
\begin{align*}
        |
       \ee
        [
        \phi
       (
        Y^{x_0}_N
        )
        ]
        -
       \ee
        [
        \phi
        (X^{x_0}_T)
       ]
        |
        & =
        \Big |
        \sum_{n=0}^{N-1} 
       \ee
        [
        u
        (
        T-t_{n+1},Y_{n+1}
       )
        ]
        -
       \ee
       [
        u
      (
        T-t_n, Y_n
     )
       ]
        \Big |
        \\
        & \leq 
        \Big |
        \sum_{n=0}^{N-1} 
       \ee
        [
        u
       (
        T-t_n,\yy^n_n
       )
        ]
        -
       \ee
      [
        u
       (
        T-t_n, Y_n
      )
       ]
      \Big |
        \\
        & \quad + 
        \Big |
        \sum_{n=0}^{N-1} 
       \ee
      [
        u
        (
        T-t_{n+1},\yy^n_{n+1}
       )
       ]
        -
       \ee
        [
        u
       (
        T-t_n,\yy^n_n
        )
       ]
      \Big |
        \\
        &=: |J_1|+|J_2|.
\end{align*}
For  the term $J_2$, we first use the It\^o formula and the corresponding Kolmogorov equation \eqref{Komgv} to deduce
\begin{align*}
        &\quad 
        u( T-t_{n+1},\yy^n_{n+1})-u(  T-t_n,\yy^n_n ) \\
        & = 
        \int_{t_n}^{t_{n+1}} D u(T-s,\yy^n)(b_\tau (
       \yy^n_n
        )
        -
        b
        (
       \yy^n
        )
        ) 
       \dd  s\\
        & \quad +
        \sum_{j=1}^m 
        \int_{t_n}^{t_{n+1}} 
        D u
        (
        T-s,\yy^n
        ) 
        \sigma_{j,\tau}
        (
       \yy^n_n
        ) 
       \dd  W_s\\
        & \quad +
        \frac{1}{2}
        \sum_{j=1}^m \int_{t_n}^{t_{n+1}} 
        D^2 u
        (
        T-s,\yy^n
       )
        (
        \sigma_{j,\tau}
        (
       \yy^n_n
        ), 
        \sigma_{j,\tau}
       (
       \yy^n_n
       )
        )\\
        & \qquad \quad -
        D^2 u
       (
        T-s,\yy^n
        )
        (
        \sigma_{j}
       (
       \yy^n
       ), 
        \sigma_{j}
       (
       \yy^n
       )
        )
       \dd s.
\end{align*}
Taking the conditional expectation argument, we then obtain
\begin{align*}
        J_2 
        & =
        \sum_{n=0}^{N-1} 
       \ee
        \int_{t_n}^{t_{n+1}}
        D u
       (
        T-s,\yy^n
       )
       (
        b_\tau
       (
       \yy^n_n
        )
        -
        b
        (
       \yy^n
       )
        ) 
       \dd  s \\
        & \quad +
        \frac{1}{2}
         \sum_{n=0}^{N-1} 
        \sum_{j=1}^m
       \ee
        \int_{t_n}^{t_{n+1}}
        D^2 u
       (
        T-s,\yy^n
        )
        (
        \sigma_{j,\tau}
       (
       \yy^n_n
        ), 
        \sigma_{j,\tau}
        (
       \yy^n_n
        )
        )  \\
        & 
        \qquad  -
        D^2 u
       (
        T-s,\yy^n
        )
        (
        \sigma_{j}
       (
       \yy^n
        ), 
        \sigma_{j}
       (
       \yy^n
       )
        )
       \dd  s
        \\
        &=:J_{21}+J_{22}.
\end{align*}
According to Lemmas \ref{lm-J1}, \ref{lm-J21}, and \ref{lm-J22}, we arrive at the desired assertion \eqref{weak}.
\end{proof}

{\bf Proof of Theorem \ref{thm-W1}}:
Theorem \ref{thm-W1} follows from \eqref{ex-decay-x} and Theorem \ref{thm-weak}.
\qed

\bibliographystyle{abbrv}
\bibliography{bib}

\end{document}